\newtheorem{theorem}{Theorem}[section]
\newtheorem{lemma}[theorem]{Lemma}
\newtheorem{proposition}[theorem]{Proposition}
\newtheorem{corollary}[theorem]{Corollary}
\theoremstyle{definition}
\theoremstyle{remark}
\newtheorem{remark}[theorem]{Remark}
\numberwithin{equation}{section}
\DeclareMathOperator\oo{O}
\begin{document}

\title[Inequalities for operator space numerical radius ]{Inequalities for operator space numerical radius of $2\times 2$ block matrices }

\author[M.S. Moslehian and M. Sattari]{Mohammad Sal Moslehian and Mostafa Sattari}

\address{Department of Pure Mathematics, Center Of Excellence in Analysis on Algebraic Structures (CEAAS), Ferdowsi University of Mashhad, P. O. Box 1159, Mashhad 91775, Iran}
\email{moslehian@um.ac.ir}
\email{msattari.b@gmail.com}

\subjclass[2010]{Primary 47A12; Secondary 46L15, 47A30, 47A63, 47L25.}

\keywords{Numerical radius operator space, operator space norm, maximal numerical radius norm, block matrix, operator space.}

\begin{abstract}
In this paper, we study the relationship between operator space norm and operator space numerical radius on the matrix space $\mathcal{M}_n(X)$, when $X$ is a numerical radius operator space. Moreover, we establish several inequalities for operator space numerical radius and the maximal numerical radius norm of $2\times 2$ operator matrices and their off-diagonal parts. One of our main results states that if $(X, (O_n))$ is an operator space, then
\begin{align*}
\frac12\max\big(W_{\max}(x_1+x_2)&, W_{\max}(x_1-x_2) \big)\\ &\le W_{\max}\Big(\begin{bmatrix} 0 & x_1 \\ x_2 & 0 \end{bmatrix}\Big)\\
&\hspace{1.5cm}\le \frac12\left(W_{\max}(x_1+x_2)+ W_{\max}(x_1-x_2) \right)
\end{align*}
for all $x_1, x_2\in \mathcal{M}_n(X)$.
\end{abstract}

\maketitle

\section{Introduction}
Let $\mathcal{B}(H)$ denote the $C^*$-algebra of bounded linear operators acting on a Hilbert space $H$. Let $\|a\|_n$ denote the operator norm and $w_n(a)$ stand for the numerical radius norm of an element $a$ in the $n\times n$ matrix algebra $\mathcal{M}_n(\mathcal{B}(H))$ identifying with $\mathcal{B}(H^{(n)})$ in a natural way, where $H^{(n)}$ is the direct sum of $n$ copy of $H$. Recall that the numerical
radius norm of $a$ is given by
$w_n(a)= \sup \{ | \langle a x, x \rangle | : x \in H^{(n)} , \| x \| =1 \}.$
An (abstract) operator space is a complex linear space $X$ together with a sequence of norms $\oo_n(\cdot )$ $(n=1,2,\ldots)$ defined on the $n\times n$ matrix space $\mathcal{M}_n(X)$ satisfying the following Ruan's axioms (cf. \cite{ER}):

\begin{equation*}
\oo_{m+n}\bigg(\begin{bmatrix}
x & 0 \\ 0 &y
\end{bmatrix} \bigg)=\max \big\{ \oo_m(x),\oo_n(y)\big\},
\end{equation*}

\begin{equation*}
\oo_n(\alpha x\beta )\le \|\alpha \|\oo_m(x)\|\beta\|.
\end{equation*}
for all $x\in \mathcal{M}_m(X)$, $y\in \mathcal{M}_n(X)$, $\alpha \in \mathcal{M}_{n,m}(\mathbb{C})$ and $\beta \in \mathcal{M}_{m,n}(\mathbb{C})$.\\
Ruan \cite{Ruan} proved that if $(X, (\oo_n))$ is an operator space, then there is a complete isometry $\psi$ from $X$ to $\mathcal{B}(H)$ for some Hilbert space $H$ in the sense that $\oo_n(x)=\|\psi_n(x)\|_n$ for all $x\in \mathcal{M}_n(X)$ and $n\in \mathbb{N}$, where $\Vert\cdot\Vert_n$ is the usual operator norm of $\mathcal{M}_n(\mathcal{B}(H))$.

Itoh and Nagisa \cite{Itoh} introduced the notion of (abstract) numerical radius operator space (NROS), see also \cite{Itoh2}. By a numerical radius operator space we mean a complex linear space $X$ admitting a sequence of norms $W_n(\cdot)$ on $\mathcal{M}_n(X)$, $n\in \mathbb{N}$, for which
\begin{equation}\label{eq1.1}
W_{m+n}\bigg(\begin{bmatrix}
x &0\\ 0& y
\end{bmatrix}\bigg)=\max \big\{W_m(x),W_n(y)\big\},
\end{equation}
\begin{equation}\label{eq1.2}
W_n(\alpha x\alpha^*)\le \|\alpha\|^2W_m(x).
\end{equation}
for all $x\in \mathcal{M}_m(X)$, $y\in \mathcal{M}_n(X)$ and $\alpha\in \mathcal{M}_{n,m}(\mathbb{C})$, where $\alpha^*$ is the conjugate transpose of $\alpha$.

They also showed that if $(X, (W_n))$ is a numerical radius operator space, then there is a $W$-complete isometry $\Phi$ from $X$ to $\mathcal{B}(H)$ for some Hilbert space $H$ in the sense that $W_n(x)=w_n\left(\Phi_n(x)\right)$ for all $x\in \mathcal{M}_n(X)$ and $n\in \mathbb{N}$, where $w_n(\cdot )$ is the usual numerical radius norm on $\mathcal{B}(H^{(n)})$.

Having a look at the known equality
\[
\frac{1}{2}\|x\|=w\bigg(\begin{bmatrix}
0 & x\\ 0 & 0
\end{bmatrix}
\bigg), \quad  x\in \mathcal{B}(H).
\]
it is shown \cite{Itoh} that for a given numerical radius operator space $(X, (W_n))$ if one defines $\oo_n\,\,(n \in \mathbb{N})$ by
\begin{equation}\label{eq1.3}
\oo_n(x):=2 W_{2n}\bigg(\begin{bmatrix}
0 & x\\ 0 & 0
\end{bmatrix}\bigg),\quad x\in \mathcal{M}_n(X),
\end{equation}
then $X$ turns into an operator space. It is interesting to notice that if an operator space $(X, (O_n))$ is given, then there may be more than one operator space numerical radius $(W_n)$ satisfying \eqref{eq1.3}, \cite{Itoh}.
For instance, consider the maximal numerical radius norm $W_{\max}$  on an operator space $(X, (\oo_n))$, which is defined by
\[
W_{\max}(x)=\frac12 \inf \|aa^*+b^*b\|, \quad \mbox{for}\ x\in \mathcal{M}_n(X),
\]
where the infimum is taken over all decompositions $x=ayb$ with $\oo_r(y)=1$, $a\in \mathcal{M}_{n,r}(\mathbb{C})$, $y\in \mathcal{M}_r(X)$, $b\in \mathcal{M}_{r,n}(\mathbb{C})$,  $r\in \mathbb{N}$.
It is proved in \cite{Itoh} that $W_{\max}$ satisfies \eqref{eq1.1}, \eqref{eq1.2} and \eqref{eq1.3}.\\
There have been several generalizations of the usual numerical range in the last few decades. These concepts are useful in investigation of quantum error correction and perturbation theory  (e.g., see
\cite{CKZ, GR, LP, aa, aaa} and  references therein).
Several mathematicians \cite{Kit, HKS, KMY} established some interesting inequalities for the block matrix $\begin{bmatrix}
x & y\\ z & w
\end{bmatrix}$ and also its off-diagonal part, i.e. $\begin{bmatrix}
0 & y \\ z& 0
\end{bmatrix}$.
There are other papers involving numerical radius inequalities; cf. \cite{AK, SMY}.
In this paper, we obtain inequalities for $W_{2n}(\cdot)$ and $W_{\max}$ of $2\times 2$ block matrices with entries in appropriate matrix spaces similar to inequalities given in \cite{Kit}. These inequalities include bounds for $2\times 2$ block matrices. Furthermore, a generalization of a well known lemma given in \cite{Itoh} is established.

\section{Inequalities for operator space numerical radius and the maximal numerical radius norm }
In this section, we provide an inequality between operator space norm and operator space numerical radius similar to the usual operator norm and the usual numerical radius norm. Also we apply it to give bounds for the off-diagonal part $\begin{bmatrix}
0 & x\\ y & 0
\end{bmatrix}$ of the $2\times 2$ block matrix $\begin{bmatrix}
z & x\\ y & w
\end{bmatrix}$
defined on $\mathcal{M}_2(\mathcal{M}_n(X)).$
First we fix our notation and terminology.

Given abstract numerical radius operator spaces (resp., operator spaces) $X, Y$ and a
 linear map $\varphi$ from $X$ to $Y$, we define $\varphi_n$ from $\mathcal{M}_n(X)$ to $\mathcal{M}_n(Y)$ by
 \[
\varphi_n\left([x_{ij}]\right)=\big[\varphi (x_{ij})\big],\quad  [x_{ij}]\in \mathcal{M}_n(X).
\]
We denote the numerical radius norm (resp., the norm) of $x=[x_{ij}]\in \mathcal{M}_n(X)$ by $W_n(x)$ (resp., $\oo_n(x)$) and the norm of $\varphi_n$ by $W_n(\varphi_n)=\sup\{W_n\left(\varphi_n(x)\right)|x\in \mathcal{M}_n(X), W_n(x)\le 1\}$ (resp., $\oo_n(\varphi_n)=\sup\{\oo_n\left(\varphi_n(x)\right)|x\in \mathcal{M}_n(X), \oo_n(x)\le 1\}$.\\
The $W$-completely bounded norm (resp., completely bounded norm) of $\varphi$ is defined by
\[
W(\varphi)_{cb}=\sup\{W_n(\varphi_n)|n\in \mathbb{N}\}\quad
(\mbox{resp.,} \oo(\varphi)_{cb}=\sup\{\oo_n(\varphi_n)|n\in \mathbb{N}\}).
\]
We say $\varphi$ is $W$-completely bounded (resp., completely bounded) if $W(\varphi)_{cb}<\infty $ (resp., $\oo(\varphi)_{cb}<\infty$) and also we call $\varphi$ a $W$-complete isometry (resp., a complete isometry) if $W\left(\varphi_n(x)\right)=W_n(x)$ (resp., $\oo\left(\varphi_n(x)\right)=\oo_n(x)$) for each $x\in \mathcal{M}_n(X)$, $n\in \mathbb{N}$.

First of all we present a relation between $W_n(\cdot)$ and $\oo_n(\cdot)$.
\begin{lemma}\label{le2.1}
If $(X, (W_n))$ is an NROS, then there is an operator space norm $(\oo_n)$ on $X$ such that
\[
\frac{1}{2}\oo_n(x)\le W_n(x)\le \oo_n(x).
\]
for all $x\in \mathcal{M}_n(X)$ and $n\in \mathbb{N}$.
\end{lemma}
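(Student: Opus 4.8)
The natural candidate is the operator space norm already introduced in the Introduction: I would set
\[
\oo_n(x) := 2\, W_{2n}\left(\begin{bmatrix} 0 & x \\ 0 & 0 \end{bmatrix}\right), \qquad x \in \mathcal{M}_n(X).
\]
By the result of Itoh and Nagisa recalled around \eqref{eq1.3}, this family $(\oo_n)$ is an operator space norm on $X$, so the existence part of the statement comes for free; it remains only to verify the two inequalities for this particular $(\oo_n)$.

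The plan is to transport everything to a concrete Hilbert space. By the Itoh--Nagisa representation theorem there is a $W$-complete isometry $\Phi\colon X \to \mathcal{B}(H)$ with $W_n(x)=w_n(\Phi_n(x))$ for every $x\in\mathcal{M}_n(X)$ and every $n$. Fix $x$ and write $a:=\Phi_n(x)$, regarded as an operator in $\mathcal{M}_n(\mathcal{B}(H))=\mathcal{B}(H^{(n)})$. Since $\Phi_n$ acts entrywise it intertwines the off-diagonal construction, that is, $\Phi_{2n}\left(\begin{bmatrix} 0 & x \\ 0 & 0 \end{bmatrix}\right)=\begin{bmatrix} 0 & a \\ 0 & 0 \end{bmatrix}$, and hence
\[
\oo_n(x) = 2\, W_{2n}\left(\begin{bmatrix} 0 & x \\ 0 & 0 \end{bmatrix}\right) = 2\, w_{2n}\left(\begin{bmatrix} 0 & a \\ 0 & 0 \end{bmatrix}\right).
\]
Invoking the known Hilbert-space identity $w\left(\begin{bmatrix} 0 & a \\ 0 & 0 \end{bmatrix}\right)=\frac12\|a\|$ recalled in the Introduction, this collapses to $\oo_n(x)=\|a\|=\|\Phi_n(x)\|_n$. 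With this identification the lemma is immediate: the classical numerical-radius inequalities on $\mathcal{B}(H^{(n)})$, namely $\frac12\|a\|\le w_n(a)\le\|a\|$, read, after substituting $w_n(a)=W_n(x)$ and $\|a\|=\oo_n(x)$, exactly as $\frac12\oo_n(x)\le W_n(x)\le\oo_n(x)$.

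The only real content is the identification $\oo_n(x)=\|\Phi_n(x)\|_n$; after that the two inequalities are just the familiar bounds between operator norm and numerical radius pulled back through the isometry. The step I would watch most carefully is the interplay between the $2\times 2$ block construction and $\Phi$: one must check that $\Phi$ respects the direct-sum and off-diagonal structure, which holds because $\Phi_n$ is defined entrywise and a $W$-complete isometry preserves the relevant matrix norms. I would also remark that the upper bound $W_n(x)\le\oo_n(x)$ can be obtained purely abstractly from axiom \eqref{eq1.2}: taking $\alpha=\begin{bmatrix} I & I \end{bmatrix}$ gives $\alpha\begin{bmatrix} 0 & x \\ 0 & 0 \end{bmatrix}\alpha^{*}=x$ with $\|\alpha\|^{2}=2$, so that $W_n(x)\le 2\,W_{2n}\left(\begin{bmatrix} 0 & x \\ 0 & 0 \end{bmatrix}\right)=\oo_n(x)$. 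The matching lower bound, however, appears to genuinely require the concrete representation, since a congruence $\alpha(\cdot)\alpha^{*}$ cannot manufacture a purely off-diagonal entry $x$ out of a diagonal block, and this is the point where I expect the representation theorem to be indispensable.
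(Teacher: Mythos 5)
Your proof is correct and takes essentially the same route as the paper: both define $\oo_n$ by \eqref{eq1.3}, pass to $\mathcal{B}(H)$ through the Itoh--Nagisa representation, and conclude from the classical bounds $\frac12\Vert a\Vert\le w_n(a)\le\Vert a\Vert$. The only difference is cosmetic: the paper quotes from \cite{Itoh} that $\Phi$ is simultaneously a complete isometry for the derived norm, i.e.\ $\oo_n(x)=\Vert\Phi_n(x)\Vert_n$, whereas you re-derive this identification from the $W$-isometry property together with the identity $w\bigl(\begin{smallmatrix}0&a\\0&0\end{smallmatrix}\bigr)=\frac12\Vert a\Vert$ --- the same fact, proved rather than cited.

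One correction to your closing aside, though: the lower bound $\frac12\oo_n(x)\le W_n(x)$ does \emph{not} genuinely require the concrete representation. You are right that a single congruence $\alpha(\cdot)\alpha^*$ cannot manufacture the corner matrix, but an average of two symmetric off-diagonal blocks can:
\[
\begin{bmatrix} 0 & x\\ 0 & 0\end{bmatrix}
=\frac12\begin{bmatrix} 0 & x\\ x & 0\end{bmatrix}
+\frac12\begin{bmatrix} 0 & x\\ -x & 0\end{bmatrix},
\]
and both summands have $W_{2n}$-norm equal to $W_n(x)$ by purely axiomatic arguments: conjugating by the unitaries $\frac{1}{\sqrt2}\begin{bmatrix} I & I\\ -I & I\end{bmatrix}$ and $\operatorname{diag}(I,e^{i\theta/2}I)$ and using \eqref{eq1.1}, \eqref{eq1.2} together with homogeneity of the norm --- these are exactly the axiom-only computations behind Lemma \ref{le2.4} (a) and (c). The triangle inequality then gives $W_{2n}\bigl(\begin{smallmatrix}0&x\\0&0\end{smallmatrix}\bigr)\le W_n(x)$, i.e.\ $\oo_n(x)\le 2W_n(x)$; equivalently, set $y=0$ in the second inequality of Theorem \ref{Th2.9}, whose proof uses only the axioms. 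So both halves of the lemma admit abstract proofs, and the representation theorem is a convenience here rather than a necessity.
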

\begin{proof}
For given $(W_n(\cdot))$ and $x\in \mathcal{M}_n(X)$, we define $(\oo_n(\cdot))$ by $$\oo_n(x)=2W_{2n}\bigg(\begin{bmatrix}
0 & x\\ 0 & 0
\end{bmatrix}\bigg).$$
Then there exists a complete and $W$-complete isometry $\Phi$ from $X$ into $\mathcal{B}(H)$ \cite{Itoh}.
As $\Phi$ is a complete isometry, we have $\oo_n(x)=\|\Phi_n(x)\|_n$. In addition, since $\Phi$ is a $W$-complete isometry, we have $W_n(x)=w_n\left(\Phi_n(x)\right)$. Therefore,
\[
W_n(x)=w_n\left(\Phi_n(x)\right)\le \|\Phi_n(x)\|_n=\oo_n(x).
\]
and
\[
W_n(x)=w_n\left(\Phi_n(x)\right)\ge \frac{1}{2}\|\Phi_n(x)\|_n=\frac{1}{2}\oo_n(x).
\]
\end{proof}
The next result can be proved easily and we omit its proof.
\begin{lemma}
If $(X,(W_n))$ is an NROS and $U\in \mathcal{M}_n$ is a unitary, then
\begin{equation}\label{eq2.1}
W_n(U^*xU)=W_n(x)
\end{equation}
for any $x\in \mathcal{M}_n(X)$.
\end{lemma}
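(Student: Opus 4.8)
The plan is to deduce both halves of the equality directly from the subordination axiom \eqref{eq1.2}, exploiting that a unitary and its adjoint each have operator norm $1$. The one subtlety is that \eqref{eq1.2} is written in the form $W_n(\alpha x\alpha^*)\le \|\alpha\|^2 W_m(x)$, so the scalar matrix $\alpha$ must be chosen so that $\alpha x\alpha^*$ reproduces the two-sided conjugation $U^*xU$ rather than $UxU^*$.

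First I would take $\alpha = U^* \in \mathcal{M}_n$, so that $\alpha^* = U$ and hence $\alpha x\alpha^* = U^*xU$. Since $U$ is unitary, $\|\alpha\| = \|U^*\| = 1$, and \eqref{eq1.2} immediately gives
\[
W_n(U^*xU) \le \|U^*\|^2\, W_n(x) = W_n(x).
\]

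For the reverse inequality I would apply the same axiom to $y = U^*xU$ with $\alpha = U$. Using $UU^* = U^*U = I$ one has $\alpha y\alpha^* = U(U^*xU)U^* = x$, and again $\|\alpha\| = \|U\| = 1$, so \eqref{eq1.2} yields
\[
W_n(x) = W_n(UyU^*) \le \|U\|^2\, W_n(y) = W_n(U^*xU).
\]
Combining the two displays produces the claimed equality $W_n(U^*xU) = W_n(x)$.

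There is essentially no real obstacle here: the whole argument is a two-line use of \eqref{eq1.2}, which is exactly why the authors flag it as routine and omit it. The only point demanding a moment's care is the bookkeeping of adjoints — one must pick $\alpha = U^*$ (not $\alpha = U$) in the first step so that the axiom, stated with $\alpha x\alpha^*$, outputs $U^*xU$; the unitarity of $U$ then makes both the forward and the reverse substitutions norm-preserving and supplies the matching bounds.
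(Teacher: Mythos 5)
Your proof is correct and is precisely the routine double application of axiom \eqref{eq1.2} that the authors had in mind when they wrote that the lemma ``can be proved easily'' and omitted the argument: taking $\alpha=U^*$ gives $W_n(U^*xU)\le\|U^*\|^2W_n(x)=W_n(x)$, and taking $\alpha=U$ applied to $U^*xU$ recovers $x$ and gives the reverse bound. Your bookkeeping of adjoints is accurate, so nothing is missing.
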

By a similar way,  identity \eqref{eq2.1} is valid for $W_{\max}$.
Also it should be mentioned here that $(\oo_n(\cdot))$ is unitarily invariant, i.e.
$\oo_n(UxV)= \oo_n(x)$ for all unitary $U, V\in \mathcal{M}_n$ and $x\in \mathcal{M}_n(X).$

Now, we use triangle inequality for $W_n(\cdot)$ and give upper and lower bounds for $W_{2n}\bigg(\begin{bmatrix}
0 & x\\ y & 0
\end{bmatrix}\bigg)$.

\begin{lemma}\label{le2.3}
If $(X,(W_n))$ is an NROS, then
\[\frac{1}{2}\max\left(\oo_n(x), \oo_n(y)\right)\leq W_{2n}\bigg(\begin{bmatrix}
0 & x\\ y & 0
\end{bmatrix}\bigg)\leq \frac12\left(\oo_n(x)+\oo_n(y)\right)
\]
for some operator space norm $(\oo_n(\cdot))$.
\end{lemma}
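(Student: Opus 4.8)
The plan is to treat the two inequalities separately, using the operator space norm $\oo_n$ supplied by Lemma \ref{le2.1} together with its defining normalisation $\oo_n(x)=2W_{2n}\big(\begin{bmatrix} 0 & x \\ 0 & 0\end{bmatrix}\big)$, the unitary invariance \eqref{eq2.1} of $W$, and the unitary invariance of $\oo$ noted above. For the upper bound I would split the off-diagonal matrix into its two corners,
\[
\begin{bmatrix} 0 & x \\ y & 0 \end{bmatrix}=\begin{bmatrix} 0 & x \\ 0 & 0 \end{bmatrix}+\begin{bmatrix} 0 & 0 \\ y & 0 \end{bmatrix},
\]
and apply the triangle inequality for $W_{2n}$. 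The first summand equals $\tfrac12\oo_n(x)$ by the normalisation. For the second, conjugating by the self-adjoint swap unitary $U=\begin{bmatrix} 0 & I \\ I & 0 \end{bmatrix}\in\mathcal{M}_{2n}$ and invoking \eqref{eq2.1} gives $W_{2n}\big(\begin{bmatrix} 0 & 0 \\ y & 0 \end{bmatrix}\big)=W_{2n}\big(\begin{bmatrix} 0 & y \\ 0 & 0 \end{bmatrix}\big)=\tfrac12\oo_n(y)$. Summing yields the bound $\tfrac12(\oo_n(x)+\oo_n(y))$.

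For the lower bound the key observation is that Lemma \ref{le2.1}, applied at level $2n$ to the single element $\begin{bmatrix} 0 & x \\ y & 0 \end{bmatrix}\in\mathcal{M}_{2n}(X)$, already gives
\[
\tfrac12\,\oo_{2n}\!\bigg(\begin{bmatrix} 0 & x \\ y & 0 \end{bmatrix}\bigg)\le W_{2n}\!\bigg(\begin{bmatrix} 0 & x \\ y & 0 \end{bmatrix}\bigg).
\]
It then suffices to evaluate $\oo_{2n}\big(\begin{bmatrix} 0 & x \\ y & 0 \end{bmatrix}\big)=\max(\oo_n(x),\oo_n(y))$. I would obtain this by multiplying on the left by the same swap unitary $U$, which transforms the off-diagonal matrix into $\begin{bmatrix} y & 0 \\ 0 & x \end{bmatrix}$; since $\oo$ is unitarily invariant and satisfies Ruan's first axiom, the value is $\max(\oo_n(y),\oo_n(x))$. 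Combining the two displays completes the lower bound.

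The step that needs the most care is the lower bound, specifically the reduction to Lemma \ref{le2.1} at the doubled level $2n$ and the identification $\oo_{2n}\big(\begin{bmatrix} 0 & x \\ y & 0 \end{bmatrix}\big)=\max(\oo_n(x),\oo_n(y))$; the latter is the operator-space analogue of the classical fact that the operator norm of an off-diagonal block equals the larger of the norms of its corners, which is why the \emph{maximum} (rather than a sum) appears on the left. An alternative, purely $W$-based route to the lower bound is a phase-averaging argument: setting $A_\theta=\begin{bmatrix} 0 & e^{i\theta}x \\ e^{-i\theta}y & 0 \end{bmatrix}$, conjugation by $\begin{bmatrix} I & 0 \\ 0 & e^{i\theta}I \end{bmatrix}$ gives $W_{2n}(A_\theta)=W_{2n}(A_0)$ for every $\theta$, and the combination $\tfrac14\big(A_0-iA_{\pi/2}-A_{\pi}+iA_{3\pi/2}\big)=\begin{bmatrix} 0 & x \\ 0 & 0 \end{bmatrix}$ yields $\tfrac12\oo_n(x)\le W_{2n}(A_0)$ by the triangle inequality and homogeneity, with the conjugate choice of coefficients handling $y$. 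I would present the first route as the main argument, since it is shorter and rests directly on the preceding lemma.
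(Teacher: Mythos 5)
Your proposal is correct and takes essentially the same route as the paper: the upper bound by splitting $\begin{bmatrix} 0 & x \\ y & 0 \end{bmatrix}$ into its two corners and applying the triangle inequality (with the swap matrix handling the lower-left corner), and the lower bound via Lemma \ref{le2.1} applied at level $2n$. The only cosmetic difference is in the second step of the lower bound: the paper compresses with the corner matrices $\begin{bmatrix} 1 & 0 \\ 0 & 0 \end{bmatrix}$ and $\begin{bmatrix} 0 & 0 \\ 1 & 0 \end{bmatrix}$ via Ruan's second axiom to get $\oo_{2n}\big(\begin{bmatrix} 0 & x \\ y & 0 \end{bmatrix}\big)\ge \oo_n(x)$ (and similarly for $y$), whereas you evaluate $\oo_{2n}$ exactly as $\max(\oo_n(x),\oo_n(y))$ using the swap unitary and the first axiom --- both steps are valid, and your phase-averaging alternative for the lower bound also checks out.
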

\begin{proof}
By \eqref{eq1.3}, there is an operator space norm $(\oo_n(\cdot))$ on $X$ such that
$$\oo_n(x)=2W_{2n}\bigg(\begin{bmatrix}
0 & x\\ 0 & 0
\end{bmatrix}\bigg).$$ First we prove the second inequality. Hence,
\begin{align*}
W_{2n}\bigg(\begin{bmatrix}
0 & x\\ y & 0
\end{bmatrix}\bigg)&\le W_{2n}\bigg(\begin{bmatrix}
0 & x\\ 0 & 0
\end{bmatrix}\bigg)+W_{2n}\bigg(\begin{bmatrix}
0 &0 \\y &0
\end{bmatrix}\bigg)\cr
&=\frac{1}{2}\oo_n(x)+W_{2n}\bigg(\begin{bmatrix}
0 & 1\\ 1 & 0
\end{bmatrix}\begin{bmatrix}
0 & y\\ 0 & 0
\end{bmatrix}\begin{bmatrix}
0 & 1 \\ 1 & 0
\end{bmatrix}\bigg)\cr
&\le \frac{1}{2}\oo_n(x)+W_{2n}\bigg(\begin{bmatrix}
0 & y \\ 0 & 0
\end{bmatrix}\bigg)\quad ({\rm by \ inequality~} \eqref{eq1.2})\cr
&=\frac{1}{2}\left(\oo_n(x)+\oo_n(y)\right).
\end{align*}
To proving the first inequality, we use Ruan's axioms as follows.
\begin{align*}
W_{2n}\bigg(\begin{bmatrix}
0 & x\\ y & 0
\end{bmatrix}\bigg)&\geq \frac12\oo_{2n}\bigg(\begin{bmatrix}
0 & x\\ y & 0
\end{bmatrix}\bigg) \quad ({\rm by \ Lemma~}\eqref{le2.1})\ \cr
&\ge\frac{1}{2}\oo_{2n}\bigg(\begin{bmatrix}
1 & 0\\ 0 & 0
\end{bmatrix}\begin{bmatrix}
0 & x\\ y & 0
\end{bmatrix}\begin{bmatrix}
0 & 0 \\ 1 & 0
\end{bmatrix}\bigg)\cr
&=\frac{1}{2}\oo_{2n}\bigg(\begin{bmatrix}
x & 0\\ 0 & 0
\end{bmatrix}\bigg)
=\frac{1}{2}\oo_n(x).
\end{align*}
Similarly $W_{2n}\bigg(\begin{bmatrix}
0 & x\\ y & 0
\end{bmatrix}\bigg)\ge \frac12\oo_n(y).$
\end{proof}

\begin{remark}\label{Re2.4}
Utilizing Lemma \ref{le2.1}, the inequalities of Lemma \ref{le2.3} can be stated as follows:
\[
\frac12\max\left(W_n(x), W_n(y)\right)\leq W_{2n}\bigg(\begin{bmatrix}
0 & x\\ y & 0
\end{bmatrix}\bigg)\leq W_n(x)+W_n(y).
\]
\end{remark}
Now we are in a position to verify a general inequality for $W_n(.),$ which contains some inequalities as special cases.

\begin{theorem}\label{Th2.5}
Let $(X, (W_n))$ be an NROS. Then for each $x,y\in \mathcal{M}_n(X)$ and
$\alpha,\beta,\gamma,\delta\in \mathcal{M}_n(\mathbb{C})$
\[
W_n(\alpha x\beta \pm \gamma y\delta)\le (\Vert\alpha\Vert \Vert\beta\Vert+ \Vert\gamma\Vert\Vert\delta\Vert)
\max(\oo_n(x), \oo_n(y)),
\]
where $(\oo_n(\cdot))$ is a certain operator space norm.
\end{theorem}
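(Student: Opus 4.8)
The plan is to avoid working with the numerical radius norm $W_n$ directly on the combination $\alpha x\beta\pm\gamma y\delta$ --- since $W_n$ obeys only the congruence-type axiom \eqref{eq1.2} and \emph{not} a general two-sided estimate $W_n(\alpha x\beta)\le\|\alpha\|\,\|\beta\|\,W_n(x)$ --- and instead to transfer the problem to the associated operator space norm $\oo_n$, which \emph{does} enjoy such a two-sided bound via Ruan's second axiom. Since the right-hand side of the asserted inequality is already phrased in terms of $\oo_n$, this transfer costs nothing.

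Concretely, I would take $\oo_n$ to be the canonical operator space norm attached to $(W_n)$ through \eqref{eq1.3}, namely $\oo_n(z)=2W_{2n}\!\big(\big[\begin{smallmatrix}0&z\\0&0\end{smallmatrix}\big]\big)$, which is exactly the norm furnished by Lemma \ref{le2.1}. The first step is to apply the upper estimate of Lemma \ref{le2.1} to the element $z=\alpha x\beta\pm\gamma y\delta\in\mathcal{M}_n(X)$, giving $W_n(\alpha x\beta\pm\gamma y\delta)\le\oo_n(\alpha x\beta\pm\gamma y\delta)$.

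Next, since $\oo_n$ is a genuine norm on $\mathcal{M}_n(X)$, the triangle inequality yields $\oo_n(\alpha x\beta\pm\gamma y\delta)\le\oo_n(\alpha x\beta)+\oo_n(\gamma y\delta)$. Now I apply Ruan's second axiom $\oo_n(\alpha x\beta)\le\|\alpha\|\,\oo_n(x)\,\|\beta\|$ (valid for the operator space norm $\oo_n$, with $\alpha,\beta,\gamma,\delta\in\mathcal{M}_n(\mathbb{C})$ and $m=n$) to each summand, obtaining the bound $\|\alpha\|\,\|\beta\|\,\oo_n(x)+\|\gamma\|\,\|\delta\|\,\oo_n(y)$. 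Replacing $\oo_n(x)$ and $\oo_n(y)$ by $\max(\oo_n(x),\oo_n(y))$ and factoring then gives exactly $(\|\alpha\|\,\|\beta\|+\|\gamma\|\,\|\delta\|)\max(\oo_n(x),\oo_n(y))$, as required.

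The computation itself is routine; the one point requiring care is \emph{coherence of the chosen norm}. One must use a single $\oo_n$ for which both the comparison $W_n\le\oo_n$ of Lemma \ref{le2.1} and the multiplicative axiom $\oo_n(\alpha x\beta)\le\|\alpha\|\,\oo_n(x)\,\|\beta\|$ hold, and this is guaranteed precisely because the norm of \eqref{eq1.3} is an operator space norm (hence satisfies Ruan's axioms) and is the very norm appearing in Lemma \ref{le2.1}. The genuine content, and the reason a direct numerical-radius attack would fail, is that passing to $\oo_n$ linearizes the otherwise non-submultiplicative behaviour of the numerical radius; once that transfer is made the estimate is immediate. (One could alternatively realize $\alpha x\beta\pm\gamma y\delta$ as a corner of a product of block matrices and argue through Lemma \ref{le2.3}, but this detour is not needed for the stated constant.)
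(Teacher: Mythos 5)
Your proof is correct, and it takes a genuinely different and more economical route than the paper's. The paper keeps the two summands together: it factors $\alpha x\beta+\gamma y\delta=\big[\begin{smallmatrix}\alpha & \gamma\end{smallmatrix}\big]\big[\begin{smallmatrix}x & 0\\ 0 & y\end{smallmatrix}\big]\big[\begin{smallmatrix}\beta \\ \delta\end{smallmatrix}\big]$, applies Ruan's second axiom once to this block product, evaluates the row and column norms via the $C^*$-identity as $\Vert\alpha\alpha^*+\gamma\gamma^*\Vert^{1/2}$ and $\Vert\beta^*\beta+\delta^*\delta\Vert^{1/2}$, invokes the arithmetic--geometric mean inequality to reach the constant $\frac12(\Vert\alpha\Vert^2+\Vert\beta\Vert^2+\Vert\gamma\Vert^2+\Vert\delta\Vert^2)$, and finally removes it by homogeneity, replacing $(\alpha,\beta,\gamma,\delta)$ by $(t\alpha,t^{-1}\beta,t\gamma,t^{-1}\delta)$ and minimizing over $t>0$; in that argument $\max(\oo_n(x),\oo_n(y))$ arises intrinsically from the first Ruan axiom $\oo_{2n}\big(\big[\begin{smallmatrix}x&0\\0&y\end{smallmatrix}\big]\big)=\max\big(\oo_n(x),\oo_n(y)\big)$. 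You instead split with the triangle inequality for $\oo_n$ and apply Ruan's axiom to each summand separately, which needs no factorization, no $C^*$-identity and no optimization, and in fact yields the sharper intermediate bound $\Vert\alpha\Vert\Vert\beta\Vert\oo_n(x)+\Vert\gamma\Vert\Vert\delta\Vert\oo_n(y)$, from which the stated inequality follows by majorizing each factor with the maximum; your coherence check --- that the single norm of \eqref{eq1.3} is simultaneously the one in Lemma \ref{le2.1} and an operator space norm obeying Ruan's axioms --- is exactly the point that needs saying, and you say it. Two remarks on what each approach buys: the paper's factorization-plus-scaling template is not wasted, since it reappears where the triangle inequality is unavailable (the functional estimate \eqref{ge3.5} in Proposition \ref{le3.3}, proved through the states $P_0$); and your route quietly sidesteps a small imprecision in the paper, namely that a \emph{single} scaling parameter $t$ applied to all four matrices only yields $\sqrt{(\Vert\alpha\Vert^2+\Vert\gamma\Vert^2)(\Vert\beta\Vert^2+\Vert\delta\Vert^2)}$, which by Cauchy--Schwarz dominates $\Vert\alpha\Vert\Vert\beta\Vert+\Vert\gamma\Vert\Vert\delta\Vert$, so the stated constant really requires independent scalings of the pairs $(\alpha,\beta)$ and $(\gamma,\delta)$ --- an issue that simply does not arise in your argument.
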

\begin{proof}
Assume that $(\oo_n(\cdot))$ is defined by \eqref{eq1.3}. Using the second inequality of Lemma \ref{le2.1}, Ruan's axioms of operator spaces and the $C^*$-identity, we have
\begin{align}
W_n(\alpha x\beta + \gamma y\delta)&\le \oo_n(\alpha x\beta + \gamma y\delta)=\oo_{2n}\bigg(\begin{bmatrix}
\alpha & \gamma
\end{bmatrix} \begin{bmatrix}
 x & 0\\ 0 & y
\end{bmatrix} \begin{bmatrix}
 \beta\\ \delta
\end{bmatrix}\bigg)\notag\\
& \le\Vert \begin{bmatrix}
\alpha & \gamma
\end{bmatrix}\Vert \oo_{2n}\bigg(\begin{bmatrix}
 x & 0\\ 0 & y
\end{bmatrix}\bigg)\Vert \begin{bmatrix}
 \beta\\ \delta
\end{bmatrix}\Vert \notag\\
& =\Vert \alpha\alpha^*+\gamma\gamma^*\Vert^{\frac12}\oo_{2n}\bigg(\begin{bmatrix}
 x & 0\\ 0 & y
\end{bmatrix}\bigg) \Vert \beta^*\beta+\delta^*\delta \Vert^{\frac12} \notag\\
 & \le \frac12\left(\Vert\alpha\alpha^*+\gamma\gamma^*\Vert+ \Vert \beta^*\beta+\delta^*\delta \Vert\right)
\oo_{2n}\bigg(\begin{bmatrix}
 x & 0 \\ 0 & y
\end{bmatrix} \bigg) \notag\\
& \le \frac12\left(\Vert\alpha\Vert^2+\Vert \beta\Vert^2+ \Vert \gamma\Vert^2+\Vert\delta \Vert^2\right)
\oo_{2n}\bigg(\begin{bmatrix}
 x & 0 \\ 0 & y
\end{bmatrix} \bigg) \label{ge2.2}
\end{align}
Let $t>0$. Replace $\alpha, \beta, \gamma, \delta$ by $t\alpha, t^{-1}\beta, t\gamma, t^{-1}\delta$, respectively, in inequality \eqref{ge2.2} and use the following equality
\[
\inf_{t>0} \frac{t^2u + t^{-2}v}{2}=\sqrt{uv}
\]
to get
\[
W_n(\alpha x\beta + \gamma y\delta)\le
\left(\Vert\alpha\Vert\Vert \beta\Vert+ \Vert \gamma\Vert\Vert\delta \Vert\right)
\max(\oo_n(x), \oo_n(y)).
\]
To completes the proof, it is sufficient to replace $y$ by $-y$ in the above inequality.
\end{proof}

\begin{corollary}
If $(X, (W_n))$ is an NROS, then there exists an operator space norm $(\oo_n(\cdot))$ such that for any $x,y\in \mathcal{M}_n(X)$ and
$\alpha,\beta\in \mathcal{M}_n(\mathbb{C})$, it holds that
\begin{eqnarray}\label{cor ge2.3}
W_n(\alpha x\beta \pm \beta y\alpha)\le 2\Vert\alpha\Vert \Vert\beta\Vert
\max(\oo_n(x), \oo_n(y)).
\end{eqnarray}
In particular,
\[
W_n(\alpha x\pm y\alpha)\le 2\Vert\alpha\Vert
\max(\oo_n(x), \oo_n(y)).
\]
and
\[
W_n(\alpha x\pm x\alpha)\le 2\Vert\alpha\Vert
\oo_n(x).
\]
\begin{proof}
To show inequality \eqref{cor ge2.3}, it is enough to take $\gamma=\beta$ and $\delta=\alpha$ in Theorem \ref{Th2.5}. The other inequalities follow immediately from inequality \eqref{cor ge2.3}.
\end{proof}
\end{corollary}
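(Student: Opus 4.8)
The plan is to obtain all three displayed inequalities as direct specializations of Theorem~\ref{Th2.5}, which already furnishes the required operator space norm $(\oo_n(\cdot))$ through the defining relation \eqref{eq1.3}. Because the four-parameter estimate
\[
W_n(\alpha x\beta \pm \gamma y\delta)\le \left(\Vert\alpha\Vert \Vert\beta\Vert+ \Vert\gamma\Vert\Vert\delta\Vert\right)\max(\oo_n(x), \oo_n(y))
\]
is already in hand, the whole task reduces to collapsing the four coefficient matrices $\alpha,\beta,\gamma,\delta$ onto $\alpha$ and $\beta$ in a symmetric way and then successively trivializing these two.

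First I would establish \eqref{cor ge2.3} by substituting $\gamma=\beta$ and $\delta=\alpha$ into Theorem~\ref{Th2.5}. This turns the term $\gamma y\delta$ into $\beta y\alpha$, while the coefficient on the right-hand side simplifies as $\Vert\alpha\Vert\Vert\beta\Vert+\Vert\gamma\Vert\Vert\delta\Vert=\Vert\alpha\Vert\Vert\beta\Vert+\Vert\beta\Vert\Vert\alpha\Vert=2\Vert\alpha\Vert\Vert\beta\Vert$, which is precisely the asserted bound.

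For the two remaining inequalities I would further restrict $\beta$. Setting $\beta$ equal to the identity matrix in \eqref{cor ge2.3} gives $\Vert\beta\Vert=1$ and converts $\alpha x\beta\pm\beta y\alpha$ into $\alpha x\pm y\alpha$, which yields $W_n(\alpha x\pm y\alpha)\le 2\Vert\alpha\Vert\max(\oo_n(x),\oo_n(y))$. Finally, putting $y=x$ in this last estimate collapses the maximum, since $\max(\oo_n(x),\oo_n(x))=\oo_n(x)$, and delivers $W_n(\alpha x\pm x\alpha)\le 2\Vert\alpha\Vert\,\oo_n(x)$.

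I do not expect a genuine obstacle, as all the analytic content is carried by Theorem~\ref{Th2.5} and the corollary amounts to three bookkeeping specializations. The only point deserving a moment of care is to keep the operator space norm $(\oo_n(\cdot))$ fixed throughout---namely the one given by \eqref{eq1.3}---so that the bounds appearing in the successive reductions all refer to one and the same norm rather than to norms that might change from one application to the next.
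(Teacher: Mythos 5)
Your proposal is correct and follows exactly the paper's own route: specialize Theorem~\ref{Th2.5} with $\gamma=\beta$, $\delta=\alpha$ to get \eqref{cor ge2.3}, then take $\beta=I$ and finally $y=x$ for the remaining two inequalities. Your closing remark about keeping the norm $(\oo_n(\cdot))$ from \eqref{eq1.3} fixed throughout is a sensible point of care, consistent with (though left implicit in) the paper.
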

\begin{corollary}
Suppose $(X, (W_n))$ is an NROS. Then there exists an operator space norm $(\oo_n(\cdot))$ such that for any $x,y\in \mathcal{M}_n(X)$ and
$\alpha,\gamma\in \mathcal{M}_n(\mathbb{C})$, it holds that
\[
W_n(\alpha x \pm \gamma y)\le (\Vert\alpha\Vert + \Vert\gamma\Vert)
\max(\oo_n(x), \oo_n(y)).
\]
In particular,
\[
W_n(\alpha x \pm \gamma x)\le (\Vert\alpha\Vert + \Vert\gamma\Vert)
\oo_n(x).
\]
\end{corollary}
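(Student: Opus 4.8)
The plan is to obtain both inequalities as immediate specializations of Theorem~\ref{Th2.5}. For the first inequality, I would apply Theorem~\ref{Th2.5} with the choice $\beta=\delta=I_n$, the $n\times n$ identity matrix, keeping $\alpha$ and $\gamma$ arbitrary. Since $\|I_n\|=1$, the products reduce to $\alpha x\beta=\alpha x$ and $\gamma y\delta=\gamma y$, while the coefficient $\|\alpha\|\,\|\beta\|+\|\gamma\|\,\|\delta\|$ collapses to $\|\alpha\|+\|\gamma\|$. This yields exactly
\[
W_n(\alpha x\pm\gamma y)\le(\|\alpha\|+\|\gamma\|)\max(\oo_n(x),\oo_n(y)),
\]
with $(\oo_n(\cdot))$ the very operator space norm supplied by Theorem~\ref{Th2.5}, namely the one defined through \eqref{eq1.3}.

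For the particular case, I would simply set $y=x$ in the inequality just derived. Then $\max(\oo_n(x),\oo_n(x))=\oo_n(x)$, and the stated bound
\[
W_n(\alpha x\pm\gamma x)\le(\|\alpha\|+\|\gamma\|)\oo_n(x)
\]
follows at once.

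There is no genuine obstacle here: the result is a direct specialization of the general inequality in Theorem~\ref{Th2.5}. The only facts to record are that the identity matrix has operator norm one and that substituting $I_n$ for $\beta,\delta$ (and later $x$ for $y$) are legitimate choices, so the entire argument is essentially a one-line deduction from the theorem.
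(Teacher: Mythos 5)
Your proposal is correct and coincides with the paper's own proof: the authors likewise take $\beta=\delta=I$ in Theorem~\ref{Th2.5} for the first inequality and then put $x=y$ to obtain the particular case. Nothing further is needed.
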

\begin{proof}
The first inequality immediately follows from taking $\beta=\delta=I$ in Theorem \ref{Th2.5}, and for the second inequality it is sufficient to put $x=y$ in the first inequality.
\end{proof}

Next we present more results for the operator space numerical radius  of $2\times 2$ off-diagonal block matrices.
To do this, we need the following lemma.
\begin{lemma}\label{le2.4}
Let $(X,(W_n))$ be an NROS. Then for each $x,y\in \mathcal{M}_n(X)$
\begin{enumerate}
\item[(a)]
$W_{2n}\bigg(\begin{bmatrix}
0 & x\\ e^{i\theta}y & 0
\end{bmatrix}\bigg)=W_{2n}\bigg(\begin{bmatrix}
0 & x\\ y& 0
\end{bmatrix}\bigg)$ $\mbox{for}~~\theta\in\mathbb{R},$
\item[(b)]
$W_{2n}\bigg(\begin{bmatrix}
0 & x \\ y & 0
\end{bmatrix}\bigg)=W_{2n}\bigg(\begin{bmatrix}
0 & y \\ x & 0
\end{bmatrix}\bigg),$
\item[(c)]
$W_{2n}\bigg(\begin{bmatrix}
x & y\\ y & x
\end{bmatrix}\bigg)=\max \left(W_n(x+y),W_n(x-y)\right),$\\
In particular,
\[
W_{2n}\bigg(\begin{bmatrix}
0 & y\\ y & 0
\end{bmatrix}\bigg)=W_n(y).
\]
\item[(d)]
$W_{2n}\bigg(\begin{bmatrix}
y & -x\\ x & y
\end{bmatrix}\bigg)= \max \left(W_n(x+iy),W_n(x-iy)\right).$
\end{enumerate}
Note that if $(X, (\oo_n))$ is an operator space, then all above statements hold for $W_{\max}$.
\end{lemma}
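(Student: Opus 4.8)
The plan is to obtain all four identities from three facts that are available for $W$: absolute homogeneity (each $W_m$ is a norm), the unitary invariance identity \eqref{eq2.1}, and Ruan's direct-sum axiom \eqref{eq1.1}. In every part the idea is to conjugate the given element of $\mathcal{M}_{2n}(X)$ by a suitable unitary $U\in\mathcal{M}_{2n}(\mathbb{C})$ written as a $2\times 2$ block matrix with scalar-multiple-of-$I$ entries, chosen so that $U^{*}(\cdot)U$ either is the matrix appearing on the other side of the equality or is block-diagonal, whereupon \eqref{eq1.1} applies. So the whole proof reduces to exhibiting the right unitary in each case and carrying out a short block multiplication.

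For (b) I would conjugate by the flip unitary $U=\begin{bmatrix}0&I\\ I&0\end{bmatrix}$: a direct computation gives $U^{*}\begin{bmatrix}0&x\\ y&0\end{bmatrix}U=\begin{bmatrix}0&y\\ x&0\end{bmatrix}$, and \eqref{eq2.1} finishes it immediately. Part (a) is more delicate, because conjugation alone distributes a phase \emph{symmetrically}: conjugating by the diagonal unitary $\mathrm{diag}(e^{i\phi}I,\,e^{-i\phi}I)$ sends the pair $(x,y)$ to $(e^{-2i\phi}x,\,e^{2i\phi}y)$. The extra trick is to then multiply the resulting matrix by the unimodular scalar $e^{2i\phi}$ and invoke $W_{2n}(\lambda A)=|\lambda|\,W_{2n}(A)$, which produces $(x,\,e^{4i\phi}y)$ with unchanged numerical radius; choosing $\phi=\theta/4$ yields $\begin{bmatrix}0&x\\ e^{i\theta}y&0\end{bmatrix}$, proving (a).

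For (c) I would conjugate by the (self-adjoint, unitary) Hadamard-type matrix $U=\tfrac1{\sqrt2}\begin{bmatrix}I&I\\ I&-I\end{bmatrix}$; the block computation gives $U^{*}\begin{bmatrix}x&y\\ y&x\end{bmatrix}U=\begin{bmatrix}x+y&0\\ 0&x-y\end{bmatrix}$, and then \eqref{eq1.1} delivers $\max(W_n(x+y),W_n(x-y))$, with the special case following by setting $x=0$ and using $W_n(-y)=W_n(y)$. For (d) the analogous diagonalizing unitary is $U=\tfrac1{\sqrt2}\begin{bmatrix}I&I\\ iI&-iI\end{bmatrix}$ (one checks $U^{*}U=I$), whose columns are the eigenvectors of the scalar rotation block; this conjugates $\begin{bmatrix}y&-x\\ x&y\end{bmatrix}$ to the block-diagonal matrix $\mathrm{diag}(y-ix,\,y+ix)$. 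After \eqref{eq1.1}, I would rewrite $y-ix=-i(x+iy)$ and $y+ix=i(x-iy)$ and use homogeneity (since $|\pm i|=1$) to turn the two diagonal entries into $W_n(x+iy)$ and $W_n(x-iy)$.

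I expect the genuinely delicate point to be part (d): one must guess the correct rotation-diagonalizing unitary and then recognize the eigenvalues $y\mp ix$ as unimodular multiples of $x\pm iy$; part (a) is the other subtle spot, since it needs the non-obvious combination of conjugation with scalar homogeneity rather than conjugation by itself. Finally, the closing remark is immediate: none of the arguments above use anything beyond absolute homogeneity, \eqref{eq2.1}, and \eqref{eq1.1}, and all three of these hold for $W_{\max}$ as well (the text already records that \eqref{eq2.1} is valid for $W_{\max}$, and $W_{\max}$ satisfies \eqref{eq1.1}), so the identical computations establish (a)--(d) verbatim with $W_{\max}$ in place of $W$.
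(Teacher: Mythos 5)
Your proof is correct and takes essentially the same approach as the paper: conjugation by scalar-block unitaries via \eqref{eq2.1}, block-diagonalization followed by the direct-sum axiom \eqref{eq1.1}, and scalar homogeneity of the norm (which the paper also needs, implicitly, in part (a), since conjugation by a diagonal unitary alone distributes the phase to both off-diagonal corners). The only cosmetic difference is in part (d), where you diagonalize $\begin{bmatrix} y & -x \\ x & y \end{bmatrix}$ directly with $\frac{1}{\sqrt{2}}\begin{bmatrix} I & I \\ iI & -iI \end{bmatrix}$, whereas the paper conjugates the auxiliary matrix $\begin{bmatrix} iy & -x \\ x & iy \end{bmatrix}$ and then substitutes $-iy$ for $y$ --- the same computation in a different order.
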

\begin{proof}
Parts (a) and (b) can be easily concluded by utilizing identity \eqref{eq2.1} to the matrix $\begin{bmatrix}
0 & x\\ y &0
\end{bmatrix}$ and the unitary operators $\begin{bmatrix}
I& 0\\0 & e^{\frac{i\theta}{2}} I
\end{bmatrix}$ and $\begin{bmatrix}
0 & I \\ I & 0
\end{bmatrix}$, respectively. Part (c) follows from applying identity \eqref{eq2.1} to the matrix $\begin{bmatrix}
x & y\\ y & x
\end{bmatrix}$ and the unitary $\frac{1}{\sqrt{2}}\begin{bmatrix}
I & I \\ -I & I
\end{bmatrix}$. To verify part (d), first we use identity \eqref{eq2.1} to the matrix
$\begin{bmatrix}
iy & -x\\ x & iy
\end{bmatrix}$ and the unitary $\frac{1}{\sqrt{2}}\begin{bmatrix}
I & iI \\ iI & I
\end{bmatrix}$ to get
\[
W_{2n}\bigg(\begin{bmatrix}
 iy & -x\\ x & iy
\end{bmatrix}\bigg)= \max \left(W_n(x+y),W_n(x-y)\right).
\]
Taking $-iy$ instead of $y$ in the above identity we reach part (d).
\end{proof}
Our first main result is stated as follows.
\begin{theorem}\label{Th2.9}
Let $(X,(W_n))$ be an NROS and $x,y\in \mathcal{M}_n(X)$. Then
\[
W_{2n}\bigg(\begin{bmatrix}
0 & x\\ y & 0
\end{bmatrix}\bigg)\ge \frac{1}{2}\max \left(W_n(x+y), W_n(x-y)\right)
\]
and
\[
W_{2n}\bigg(\begin{bmatrix}
0 & x \\ y & 0
\end{bmatrix}\bigg)\le \frac{1}{2}\left(W_n(x+y)+W_n(x-y)\right).
\]
\end{theorem}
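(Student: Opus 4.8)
The plan is to obtain both inequalities as purely formal consequences of the triangle inequality for the norm $W_{2n}(\cdot)$ on $\mathcal{M}_{2n}(X)$, combined with Lemma \ref{le2.4}. From that lemma I will use three facts repeatedly: the phase invariance $W_{2n}\big(\begin{bmatrix}0&x\\ e^{i\theta}y&0\end{bmatrix}\big)=W_{2n}\big(\begin{bmatrix}0&x\\ y&0\end{bmatrix}\big)$ of part (a) (specialised to $\theta=\pi$, i.e. to a sign change in the lower-left corner), the swap invariance $W_{2n}\big(\begin{bmatrix}0&x\\ y&0\end{bmatrix}\big)=W_{2n}\big(\begin{bmatrix}0&y\\ x&0\end{bmatrix}\big)$ of part (b), and the evaluation $W_{2n}\big(\begin{bmatrix}0&z\\ z&0\end{bmatrix}\big)=W_n(z)$ furnished by the ``in particular'' case of part (c). No Hilbert space representation is needed once Lemma \ref{le2.4} is available.

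For the upper bound I would split the off-diagonal matrix additively as
\[
\begin{bmatrix}0&x\\ y&0\end{bmatrix}=\frac12\begin{bmatrix}0&x+y\\ x+y&0\end{bmatrix}+\frac12\begin{bmatrix}0&x-y\\ -(x-y)&0\end{bmatrix},
\]
which one checks entrywise, and then apply the triangle inequality. The first summand contributes $\frac12 W_n(x+y)$ by the special case of part (c). For the second summand I first use part (a) with $\theta=\pi$ to replace the lower-left entry $-(x-y)$ by $x-y$ without changing $W_{2n}$, and then apply the special case of part (c) again to identify its value as $\frac12 W_n(x-y)$. Adding the two contributions yields exactly $\frac12\big(W_n(x+y)+W_n(x-y)\big)$.

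For the lower bound I would run the same identities in the opposite direction. Writing $W_n(x+y)=W_{2n}\big(\begin{bmatrix}0&x+y\\ x+y&0\end{bmatrix}\big)$ via part (c) and decomposing $\begin{bmatrix}0&x+y\\ x+y&0\end{bmatrix}=\begin{bmatrix}0&x\\ y&0\end{bmatrix}+\begin{bmatrix}0&y\\ x&0\end{bmatrix}$, the triangle inequality together with part (b) (which makes the two summands have equal $W_{2n}$) gives $W_n(x+y)\le 2\,W_{2n}\big(\begin{bmatrix}0&x\\ y&0\end{bmatrix}\big)$. The analogous decomposition $\begin{bmatrix}0&x-y\\ x-y&0\end{bmatrix}=\begin{bmatrix}0&x\\ -y&0\end{bmatrix}+\begin{bmatrix}0&-y\\ x&0\end{bmatrix}$, where I chain parts (b) and (a) to recognise that \emph{both} summands equal $W_{2n}\big(\begin{bmatrix}0&x\\ y&0\end{bmatrix}\big)$, gives $W_n(x-y)\le 2\,W_{2n}\big(\begin{bmatrix}0&x\\ y&0\end{bmatrix}\big)$. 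Taking the maximum of these two inequalities produces the asserted lower bound.

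I expect no genuine obstacle here, since the whole argument is formal once Lemma \ref{le2.4} is in place; the only step needing a little care is the $x-y$ term of the lower bound, where the signs in $\begin{bmatrix}0&x\\ -y&0\end{bmatrix}$ and $\begin{bmatrix}0&-y\\ x&0\end{bmatrix}$ force one to compose parts (a) and (b) rather than invoke either alone. Finally, because Lemma \ref{le2.4} explicitly records that all its identities remain valid for the maximal numerical radius norm $W_{\max}$ when $(X,(\oo_n))$ is an operator space, the very same computation delivers the $W_{\max}$ version stated in the abstract.
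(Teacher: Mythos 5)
Your proof is correct, and it coincides with the paper's on one half while taking a genuinely different route on the other. For the upper bound you and the paper use exactly the same ingredients: the paper starts from $W_n(x+y)+W_n(x-y)$, rewrites each term as $W_{2n}$ of a symmetric off-diagonal matrix via Lemma \ref{le2.4} (c) (flipping one sign with part (a)), and applies the triangle inequality to the sum of those two matrices, which equals $2\begin{bmatrix} 0 & x\\ y & 0\end{bmatrix}$; your convex splitting $\begin{bmatrix} 0 & x\\ y & 0\end{bmatrix}=\frac12\begin{bmatrix} 0 & x+y\\ x+y & 0\end{bmatrix}+\frac12\begin{bmatrix} 0 & x-y\\ -(x-y) & 0\end{bmatrix}$ is this same computation read in the opposite direction. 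Where you diverge is the lower bound: the paper gets $W_n(x+y)\le 2\,W_{2n}\big(\begin{bmatrix} 0 & x\\ y & 0\end{bmatrix}\big)$ in one line by compressing with the row matrix $\alpha=\begin{bmatrix}1 & 1\end{bmatrix}$ and invoking the NROS axiom \eqref{eq1.2}, $W_n(\alpha z \alpha^*)\le\|\alpha\|^2 W_{2n}(z)$, then handles $x-y$ by the sign flip of part (a); you instead expand $\begin{bmatrix} 0 & x\pm y\\ x\pm y & 0\end{bmatrix}$ as a sum of two off-diagonal matrices and use parts (a), (b) and the triangle inequality to see that both summands carry the same $W_{2n}$ value, which is equally valid (your sign bookkeeping in the $x-y$ case, chaining (b) with (a), checks out). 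Your route is slightly longer but buys uniformity: the entire theorem becomes a formal consequence of Lemma \ref{le2.4} plus the norm axioms, so, as you note, it transfers verbatim to $W_{\max}$ and would in fact recover the paper's later $W_{\max}$ theorem without the explicit $ayb$-decomposition argument the paper carries out there; the paper's compression trick transfers as well, since $W_{\max}$ also satisfies \eqref{eq1.2}, but the paper opts to reprove the $W_{\max}$ case directly from the definition.
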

\begin{proof}
\begin{align*}
W_n(x+y)&= W_n\bigg( \begin{bmatrix}
1 & 1
\end{bmatrix} \begin{bmatrix}
0 & x\\ y & 0
\end{bmatrix} \begin{bmatrix}
 1 \\ 1
\end{bmatrix}\bigg)\cr
&\le \left\Vert \begin{bmatrix}
1 & 1 \end{bmatrix}\right\Vert^2
W_{2n}\bigg(\begin{bmatrix}
0 & x\\ y & 0
\end{bmatrix}\bigg) \quad ({\rm by\ inequality~} \eqref{eq1.2})\cr
&= 2W_{2n} \bigg(\begin{bmatrix}
0 & x\\ y & 0
\end{bmatrix}\bigg).
\end{align*}
Hence,
\begin{equation}\label{eq2.2}
\frac{1}{2}W_n(x+y)\le W_{2n}\bigg(\begin{bmatrix}
0 & x\\ y & 0
\end{bmatrix}\bigg).
\end{equation}
Replacing $y$ by $-y$ in inequality \eqref{eq2.2}, we get
\begin{equation}\label{eq2.3}
\frac{1}{2}W_n(x-y)\le W_{2n}\bigg( \begin{bmatrix}
0 & x\\ -y & 0
\end{bmatrix}\bigg)
=W_{2n}\bigg(\begin{bmatrix}
0 & x \\ y & 0
\end{bmatrix}\bigg)~ \text{(by Lemma \ref{le2.4} (a))}
\end{equation}
Now, the first inequality follows from inequalities \eqref{eq2.2} and \eqref{eq2.3}.
To prove the second inequality, we apply triangle inequality and Lemma \ref{le2.4} as follows:
\begin{align*}
W_n(x+y)+W_n(x-y)&=
W_{2n}\bigg(\begin{bmatrix}
0 & x+y\\ x+y & 0
\end{bmatrix}\bigg)+ W_{2n} \bigg(\begin{bmatrix}
0 & x-y\\ x-y & 0
\end{bmatrix}\bigg)\cr
&=W_{2n}\bigg(\begin{bmatrix}
0 & x+y\\ x+y & 0
\end{bmatrix}\bigg)+ W_{2n} \bigg(\begin{bmatrix}
0 & x-y\\ y-x & 0
\end{bmatrix}\bigg)\cr
&\hspace{3.8cm} \text{(by Lemma \ref{le2.4} (a) and (c))}\cr
&\ge W_{2n} \bigg( \begin{bmatrix}
0 & x+y \\ x+y & 0
\end{bmatrix}+\begin{bmatrix}
0 & x-y \\ y-x & 0
\end{bmatrix}\bigg)\cr
&=2 W_{2n} \bigg(\begin{bmatrix}
0 & x\\ y & 0
\end{bmatrix}\bigg).
\end{align*}
\end{proof}
\begin{corollary}
If $(X,(W_n))$ is an NROS and $x,y\in \mathcal{M}_n(X)$, then
\[
\max \left(W_n(x), W_n(y)\right)\le
W_{2n}\bigg(\begin{bmatrix}
0 & x+y \\ x-y & 0
\end{bmatrix}\bigg)\le W_n(x)+W_n(y).
\]
\end{corollary}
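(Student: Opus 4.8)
The plan is to derive this corollary directly from Theorem~\ref{Th2.9} by a change of variables. The key observation is that the off-diagonal matrix appearing in the statement has entries $x+y$ and $x-y$, which are exactly the sum and difference that Theorem~\ref{Th2.9} produces when run with a suitably shifted pair of arguments. Since $\mathcal{M}_n(X)$ is a linear space, the elements $x+y$ and $x-y$ again lie in $\mathcal{M}_n(X)$, so the substitution is legitimate.

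Concretely, I would apply Theorem~\ref{Th2.9} with $x$ replaced by $x+y$ and $y$ replaced by $x-y$. Under this substitution the two combinations controlling the bounds transform as $(x+y)+(x-y)=2x$ and $(x+y)-(x-y)=2y$, so the lower bound of Theorem~\ref{Th2.9} becomes
\[
W_{2n}\bigg(\begin{bmatrix} 0 & x+y \\ x-y & 0 \end{bmatrix}\bigg) \ge \frac12 \max\big(W_n(2x), W_n(2y)\big),
\]
while the upper bound becomes
\[
W_{2n}\bigg(\begin{bmatrix} 0 & x+y \\ x-y & 0 \end{bmatrix}\bigg) \le \frac12\big(W_n(2x)+W_n(2y)\big).
\]

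The only remaining step is to invoke the homogeneity of the norm $W_n$, namely $W_n(2x)=2W_n(x)$ and $W_n(2y)=2W_n(y)$. Substituting these identities into the two displays above cancels the factor $\frac12$ and yields
\[
\max\big(W_n(x),W_n(y)\big) \le W_{2n}\bigg(\begin{bmatrix} 0 & x+y \\ x-y & 0 \end{bmatrix}\bigg) \le W_n(x)+W_n(y),
\]
which is precisely the asserted chain of inequalities.

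I do not anticipate any genuine obstacle here: the result is a formal consequence of Theorem~\ref{Th2.9}, and essentially all of the content lies in selecting the correct substitution and then simplifying $2x$ and $2y$ back through homogeneity. The single point worth verifying is that $W_n$ is a bona fide norm, so that homogeneity is indeed available; this is guaranteed by the standing hypothesis that $(X,(W_n))$ is an NROS.
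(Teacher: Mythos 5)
Your proof is correct and is essentially identical to the paper's own argument, which likewise obtains the corollary by substituting $x+y$ and $x-y$ for $x$ and $y$ in Theorem~\ref{Th2.9}. You merely spell out the simplification via homogeneity of $W_n$, which the paper leaves implicit.
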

\begin{proof}
It's enough to take $x+y$ and $x-y$ instead of $x$ and $y$, respectively, in Theorem \ref{Th2.9}.
\end{proof}
\begin{proposition}
Suppose $(X,(W_n))$ is an NROS and $x,y \in \mathcal{M}_n(X)$. Then
\[
W_{2n}\bigg( \begin{bmatrix}
0 & x\\ y & 0
\end{bmatrix}\bigg) \le \min\left(W_n(x),W_n(y)\right) +\frac{\min \left( \oo_n(x+y),\oo_n(x-y)\right)}{2}
\]
for some operator space norm $(\oo_n(\cdot))$.
\end{proposition}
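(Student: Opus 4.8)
The plan is to split the off-diagonal matrix by the triangle inequality into a ``symmetric'' block plus a single off-diagonal corner, and to read off the resulting terms from identities already established. Three auxiliary facts drive the argument: $W_{2n}\left[\begin{smallmatrix} 0 & z \\ z & 0 \end{smallmatrix}\right]=W_n(z)$, which is the special case of Lemma \ref{le2.4}(c); $W_{2n}\left[\begin{smallmatrix} 0 & z \\ 0 & 0 \end{smallmatrix}\right]=\frac12\oo_n(z)$, which is immediate from \eqref{eq1.3}; and $W_{2n}\left[\begin{smallmatrix} 0 & 0 \\ z & 0 \end{smallmatrix}\right]=\frac12\oo_n(z)$, obtained by combining Lemma \ref{le2.4}(b) with \eqref{eq1.3}. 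I also record that $W_n(-z)=W_n(z)$ and $\oo_n(-z)=\oo_n(z)$, since both $W_n$ and $\oo_n$ are norms.

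First I would write $\left[\begin{smallmatrix} 0 & x \\ y & 0 \end{smallmatrix}\right]=\left[\begin{smallmatrix} 0 & y \\ y & 0 \end{smallmatrix}\right]+\left[\begin{smallmatrix} 0 & x-y \\ 0 & 0 \end{smallmatrix}\right]$ and apply the triangle inequality together with the first two auxiliary identities to get $W_{2n}\left[\begin{smallmatrix} 0 & x \\ y & 0 \end{smallmatrix}\right]\le W_n(y)+\frac12\oo_n(x-y)$. The complementary splitting $\left[\begin{smallmatrix} 0 & x \\ y & 0 \end{smallmatrix}\right]=\left[\begin{smallmatrix} 0 & x \\ x & 0 \end{smallmatrix}\right]+\left[\begin{smallmatrix} 0 & 0 \\ y-x & 0 \end{smallmatrix}\right]$, now using the third auxiliary identity and $\oo_n(y-x)=\oo_n(x-y)$, gives $W_{2n}\left[\begin{smallmatrix} 0 & x \\ y & 0 \end{smallmatrix}\right]\le W_n(x)+\frac12\oo_n(x-y)$. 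Taking the smaller of the two right-hand sides yields the bound $\min(W_n(x),W_n(y))+\frac12\oo_n(x-y)$.

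To produce the $\oo_n(x+y)$ alternative, I would replace $y$ by $-y$; by Lemma \ref{le2.4}(a) (with $e^{i\theta}=-1$) this leaves $W_{2n}\left[\begin{smallmatrix} 0 & x \\ y & 0 \end{smallmatrix}\right]$ unchanged, while rerunning the two decompositions on $\left[\begin{smallmatrix} 0 & x \\ -y & 0 \end{smallmatrix}\right]$ now involves $\oo_n(x+y)$ and $W_n(-y)=W_n(y)$, giving $W_{2n}\left[\begin{smallmatrix} 0 & x \\ y & 0 \end{smallmatrix}\right]\le\min(W_n(x),W_n(y))+\frac12\oo_n(x+y)$. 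Taking the minimum of this with the bound of the previous paragraph delivers exactly the claimed inequality. I do not anticipate a genuine obstacle here: once the three auxiliary identities are in place the argument is bookkeeping with the triangle inequality. The one point requiring foresight is choosing the four decompositions so that the corner term always carries the factor $\frac12$ and so that \emph{both} $\oo_n(x+y)$ and $\oo_n(x-y)$ are generated; the sign change licensed by Lemma \ref{le2.4}(a) is precisely what makes the second pair of decompositions available.
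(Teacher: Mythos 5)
Your proof is correct and follows essentially the same route as the paper: both split $\left[\begin{smallmatrix} 0 & x \\ y & 0 \end{smallmatrix}\right]$ into a symmetric off-diagonal block plus a single corner block, invoke the identity $W_{2n}\Big(\begin{bmatrix} 0 & z \\ z & 0 \end{bmatrix}\Big)=W_n(z)$ from Lemma \ref{le2.4}(c) together with \eqref{eq1.3} and the triangle inequality, and use the sign flip of Lemma \ref{le2.4}(a) to generate both $\oo_n(x+y)$ and $\oo_n(x-y)$. The only cosmetic difference is that you obtain the $W_n(x)$ bound from a second decomposition with the corner at the bottom-left, whereas the paper interchanges $x$ and $y$ via Lemma \ref{le2.4}(b).
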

\begin{proof}
By Lemma \ref{le2.4} (a), (b) and identity \eqref{eq1.3}, we get
\begin{align}\label{eq*}
\frac{1}{2}\oo_n(x+y)+W_n(y) &=
W_{2n}\bigg(\begin{bmatrix}
0 & x+y\\ 0 & 0
\end{bmatrix}\bigg)+ W_{2n} \bigg(\begin{bmatrix}
0 & y \\ y & 0
\end{bmatrix}\bigg)\cr
&=W_{2n}\bigg(\begin{bmatrix}
0 & x+y\\ 0 & 0
\end{bmatrix}\bigg)+ W_{2n} \bigg(\begin{bmatrix}
0 & -y \\ y & 0
\end{bmatrix}\bigg)\cr
&\ge W_{2n}\bigg(\begin{bmatrix}
0 & x\\ y & 0
\end{bmatrix}\bigg) (\rm{by\ triangle\ inequality})
\end{align}
Replacing $y$ by $-y$ in inequality \eqref{eq*} and using Lemma \ref{le2.4} (a), we obtain
\begin{equation}\label{eq&}
W_{2n} \bigg( \begin{bmatrix}
 0 & x\\ y & 0
\end{bmatrix}\bigg)\le \frac{1}{2}\oo_n(x-y) +W_n(y).
\end{equation}
It follows from inequalities \eqref{eq*} and \eqref{eq&} that
\begin{equation}\label{eq2.4}
W_{2n}\bigg(\begin{bmatrix}
0 & x \\ y & 0
\end{bmatrix}\bigg)\le \frac{\min \left( \oo_n(x+y),\oo_n(x-y)\right)}{2}+W_n(y).
\end{equation}
Interchanging $x$ and $y$ in inequality \eqref{eq2.4} and using Lemma \ref{le2.4} (b), we get
\begin{equation}\label{eq2.5}
W_{2n}\bigg(\begin{bmatrix}
0 & x \\ y & 0	
\end{bmatrix}\bigg)\le \frac{\min \left(\oo_n(x+y),\oo_n(x-y)\right)}{2}+W_n(x).
\end{equation}
Now the result follows from inequalities \eqref{eq2.4} and \eqref{eq2.5}.
\end{proof}
\begin{theorem}
Let $(X,(W_n))$ be an NROS and $x, y\in \mathcal{M}_n(X)$. Then
\[
W_{2n} \bigg( \begin{bmatrix}
 0 & x\\ y & 0
\end{bmatrix}\bigg)\ge \bigg\vert\frac{1}{2}\max\left(\oo_n(x+y), \oo_n(x-y)\right) -
\min\left(W_n(x), W_n(y)\right)\bigg\vert,
\]
and
\[
W_{2n} \bigg( \begin{bmatrix}
 0 & x\\ y & 0
\end{bmatrix}\bigg)\ge \bigg\vert \max\left(W_n(x), W_n(y)\right) - \frac{1}{2}\min\left(\oo_n(x+y), \oo_n(x-y)\right)
\bigg\vert.
\]
\end{theorem}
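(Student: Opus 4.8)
The plan is to deduce the entire statement from the \emph{reverse} triangle inequality for the norm $W_{2n}$, namely $\big|W_{2n}(a)-W_{2n}(b)\big|\le W_{2n}(a-b)$, applied to exactly the block decompositions already used in the preceding Proposition, but now solved for the ``large'' summand rather than for $W_{2n}\!\left[\begin{smallmatrix}0&x\\y&0\end{smallmatrix}\right]$. The advantage of using the two-sided reverse inequality (instead of the one-sided $W_{2n}(a)\le W_{2n}(b)+W_{2n}(a-b)$ that produced the upper bound in the Proposition) is that it furnishes both directions of each absolute value simultaneously, so that the final estimates with moduli require no extra casework.

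First I would record four atomic estimates. Writing $\oo_n(x)=2W_{2n}\!\left[\begin{smallmatrix}0&x\\0&0\end{smallmatrix}\right]$ as in \eqref{eq1.3}, one checks the identity $\left[\begin{smallmatrix}0&x+y\\0&0\end{smallmatrix}\right]-\left[\begin{smallmatrix}0&y\\-y&0\end{smallmatrix}\right]=\left[\begin{smallmatrix}0&x\\y&0\end{smallmatrix}\right]$. Since $W_{2n}\!\left[\begin{smallmatrix}0&y\\-y&0\end{smallmatrix}\right]=W_{2n}\!\left[\begin{smallmatrix}0&y\\y&0\end{smallmatrix}\right]=W_n(y)$ by Lemma~\ref{le2.4}(a) and~(c), the reverse triangle inequality gives $W_{2n}\!\left[\begin{smallmatrix}0&x\\y&0\end{smallmatrix}\right]\ge\big|\tfrac12\oo_n(x+y)-W_n(y)\big|$. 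Replacing $y$ by $-y$ and applying Lemma~\ref{le2.4}(a) on the left yields the same bound with $\oo_n(x+y)$ replaced by $\oo_n(x-y)$; interchanging $x$ and $y$ via Lemma~\ref{le2.4}(b) (and using that $\oo_n$ is a norm, so $\oo_n(y\pm x)=\oo_n(x\pm y)$) produces the two companion estimates carrying $W_n(x)$ in place of $W_n(y)$. Collecting all four,
\[
W_{2n}\bigg(\begin{bmatrix}0&x\\y&0\end{bmatrix}\bigg)\ \ge\ \max_{\substack{u\in\{x+y,\,x-y\}\\ z\in\{x,\,y\}}}\Big|\tfrac12\oo_n(u)-W_n(z)\Big|.
\]

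The theorem then follows from one elementary observation. Put $p=\tfrac12\oo_n(x+y)$, $q=\tfrac12\oo_n(x-y)$, $s=W_n(x)$, $t=W_n(y)$, so the right-hand side above is $\max\big(|p-s|,|p-t|,|q-s|,|q-t|\big)$. The number $\max(p,q)-\min(s,t)$ equals ``(the larger of $p,q$) minus (the smaller of $s,t$)'', hence its modulus is literally one of the four quantities $|p-s|,|p-t|,|q-s|,|q-t|$; the identical remark applies to $\max(s,t)-\min(p,q)$. Therefore both $\big|\max(p,q)-\min(s,t)\big|$ and $\big|\max(s,t)-\min(p,q)\big|$ are dominated by the displayed maximum, and rewriting $\max(p,q)=\tfrac12\max(\oo_n(x+y),\oo_n(x-y))$ and $\min(p,q)=\tfrac12\min(\oo_n(x+y),\oo_n(x-y))$ recovers precisely the two asserted inequalities. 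The only point needing care — the mild ``main obstacle'' here — is verifying that this max/min identity holds in all four sign configurations and confirming the symmetry step, so that the four atomic bounds genuinely assemble into the stated max/min form without weakening the two-sided estimate; everything else is routine. The same scheme applies verbatim to $W_{\max}$ on an operator space, using the $W_{\max}$-versions of Lemma~\ref{le2.4} noted there.
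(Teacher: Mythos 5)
Your proof is correct and takes essentially the same route as the paper: the same block identities, identity \eqref{eq1.3}, and Lemma \ref{le2.4}(a)--(c) produce the same four estimates $W_{2n}\big(\begin{smallmatrix}0&x\\ y&0\end{smallmatrix}\big)\ge\big|\tfrac12\oo_n(x\pm y)-W_n(z)\big|$, $z\in\{x,y\}$, with your single reverse triangle inequality merely packaging the two one-sided triangle-inequality applications the paper carries out separately in \eqref{eq2.6}--\eqref{eq2.15}. Your closing observation that $\big|\max(p,q)-\min(s,t)\big|$ and $\big|\max(s,t)-\min(p,q)\big|$ are each literally one of the four moduli is a tidy way to assemble both asserted inequalities at once, including the second one that the paper dispatches with ``a similar argument.''
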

\begin{proof}
Utilizing identity \eqref{eq1.3}, Lemma \ref{le2.4} (a) and (c), we get
\begin{align} \label{eq2.6}
\frac12 \oo_n(x+y) &=W_{2n} \bigg( \begin{bmatrix}
 0 & x+y\\ 0 & 0
\end{bmatrix}\bigg) = W_{2n} \bigg( \begin{bmatrix}
 0 & x\\ y & 0
\end{bmatrix} + \begin{bmatrix}
 0 & y\\ -y & 0 \end{bmatrix}\bigg)\notag \\
& \le W_{2n} \bigg( \begin{bmatrix}
 0 & x\\ y & 0
\end{bmatrix}\bigg) +W_n(y).
\end{align}
Replacing $y$ by $-y$ in inequality (\ref{eq2.6}) and using Lemma \ref{le2.4} (a) we have
\begin{eqnarray}\label{eq2.7}
\frac12 \oo_n(x-y) \le W_{2n} \bigg( \begin{bmatrix}
 0 & x\\ y & 0
\end{bmatrix}\bigg) +W_n(y).
\end{eqnarray}
So, by inequalities \eqref{eq2.6} and \eqref{eq2.7}

\begin{eqnarray}\label{eq2.8}
\frac12 \max \left(\oo_n(x+y), \oo_n(x-y)\right) \le W_{2n} \bigg( \begin{bmatrix}
 0 & x\\ y & 0
\end{bmatrix}\bigg) +W_n(y).
\end{eqnarray}
Interchanging $x$ and $y$ in inequality \eqref{eq2.8} and using Lemma \ref{le2.4} (b) we reach
\begin{eqnarray}\label{eq2.9}
\frac12 \max \left(\oo_n(x+y), \oo_n(x-y)\right) \le W_{2n} \bigg( \begin{bmatrix}
 0 & x\\ y & 0
\end{bmatrix}\bigg) +W_n(x).
\end{eqnarray}
It follows from inequalities \eqref{eq2.8} and \eqref{eq2.9} that
\begin{equation}\label{eq2.10}
\frac12 \max \left(\oo_n(x+y), \oo_n(x-y)\right) -\min\left( W_n(x), W_n(y)\right)
\le W_{2n} \bigg( \begin{bmatrix}
 0 & x\\ y & 0
\end{bmatrix}\bigg).
\end{equation}
On the other hand, by identity \eqref{eq1.3}, we have
\begin{align}\label{eq2.11}
W_{2n} \bigg( \begin{bmatrix}
 0 & x\\ y & 0
\end{bmatrix}\bigg) +\frac12 \oo_n(x-y)&= W_{2n} \bigg( \begin{bmatrix}
 0 & x\\ y & 0
\end{bmatrix}\bigg) + W_{2n} \bigg( \begin{bmatrix}
 0 & x-y\\ 0 & 0
\end{bmatrix}\bigg)\notag\\
& \ge W_{2n} \bigg( \begin{bmatrix}
 0 & x\\ y & 0
\end{bmatrix} - \begin{bmatrix}
 0 & x-y\\ 0 & 0
\end{bmatrix}\bigg)\notag\\
&= W_{2n} \bigg( \begin{bmatrix}
 0 & y\\ y & 0
\end{bmatrix}\bigg)= W_n(y).
\end{align}
Again, by replacing $y$ by $-y$ in inequality \eqref{eq2.11} and using Lemma \ref{le2.4} (a), we get
\begin{eqnarray}\label{eq2.12}
W_n(y) \le W_{2n} \bigg( \begin{bmatrix}
 0 & x\\ y & 0
\end{bmatrix}\bigg) +\frac12 \oo_n(x+y).
\end{eqnarray}
We therefore infer, by inequalities \eqref{eq2.11} and \eqref{eq2.12}, that
\begin{eqnarray}\label{eq2.13}
W_n(y) \le W_{2n} \bigg( \begin{bmatrix}
 0 & x\\ y & 0
\end{bmatrix}\bigg) +\frac12 \max\left(\oo_n(x+y), \oo_n(x-y)\right).
\end{eqnarray}
In inequality\eqref{eq2.13} we interchange $x$ and $y$ and use Lemma \ref{le2.4} (b) to get
\begin{eqnarray}\label{eq2.14}
W_n(x) \le W_{2n} \bigg( \begin{bmatrix}
 0 & x\\ y & 0
\end{bmatrix}\bigg) +\frac12 \max\left(\oo_n(x+y), \oo_n(x-y)\right).
\end{eqnarray}
It follows from inequalities \eqref{eq2.13} and \eqref{eq2.14} that
\begin{equation}\label{eq2.15}
-\Big(\frac12 \max\left(\oo_n(x+y), \oo_n(x-y)\right) -\min\left(W_n(x), W_n(y)\right)\Big)
 \le W_{2n} \Big( \begin{bmatrix}
 0 & x\\ y & 0
\end{bmatrix}\Big).
\end{equation}
Thus the first desired inequality follows immediately from inequalities \eqref{eq2.10} and \eqref{eq2.15}.

The other inequality is deduced by a similar argument.
\end{proof}
In the sequel, we present some inequalities for $W_{\max}$ having common nature to our earlier results. The next theorem is one of our main results.
\begin{theorem}
Let $(X, (\oo_n))$ be an operator space. Then
\begin{eqnarray*}
\frac12\max\left(W_{\max}(x_1+x_2), W_{\max}(x_1-x_2) \right) &\le& W_{\max}\Big(\begin{bmatrix} 0 & x_1 \\ x_2 & 0 \end{bmatrix}\Big)\\
&\le&
\frac12\left(W_{\max}(x_1+x_2)+ W_{\max}(x_1-x_2) \right)
\end{eqnarray*}
for all $x_1, x_2\in \mathcal{M}_n(X)$.
\end{theorem}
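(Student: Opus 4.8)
The plan is to carry over the proof of Theorem \ref{Th2.9} essentially verbatim, replacing the numerical radius operator space norm $W_n$ by $W_{\max}$ throughout. The only thing that genuinely needs justification is that every tool used in that proof remains available for $W_{\max}$, and each of these has already been recorded in the text: $W_{\max}$ satisfies Ruan's axioms \eqref{eq1.1} and \eqref{eq1.2} together with the defining identity \eqref{eq1.3} (by \cite{Itoh}); it is unitarily invariant, so \eqref{eq2.1} holds for it; and hence all parts of Lemma \ref{le2.4} are valid for $W_{\max}$, as stated in the remark closing that lemma. Since $W_{\max}$ is itself a norm, the triangle inequality is automatic, so no further structure is required.

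For the lower bound, the first step is to write
\[
W_{\max}(x_1+x_2)=W_{\max}\bigg(\begin{bmatrix}1&1\end{bmatrix}\begin{bmatrix}0&x_1\\x_2&0\end{bmatrix}\begin{bmatrix}1\\1\end{bmatrix}\bigg),
\]
and then apply inequality \eqref{eq1.2} for $W_{\max}$ with $\alpha=\begin{bmatrix}1&1\end{bmatrix}$, for which $\|\alpha\|^2=2$, obtaining $W_{\max}(x_1+x_2)\le 2\,W_{\max}\big(\begin{bmatrix}0&x_1\\x_2&0\end{bmatrix}\big)$. Replacing $x_2$ by $-x_2$ and invoking Lemma \ref{le2.4}(a) for $W_{\max}$ yields the companion bound with $x_1+x_2$ replaced by $x_1-x_2$. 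Taking the maximum of the two gives the left-hand inequality.

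For the upper bound, I would use Lemma \ref{le2.4}(c) for $W_{\max}$ to rewrite both $W_{\max}(x_1+x_2)$ and $W_{\max}(x_1-x_2)$ as off-diagonal block quantities, and then Lemma \ref{le2.4}(a) to flip the sign of the lower-left entry of the second one, so that the sum of the two block matrices becomes the off-diagonal matrix with upper-right entry $2x_1$ and lower-left entry $2x_2$. The triangle inequality then produces
\[
W_{\max}(x_1+x_2)+W_{\max}(x_1-x_2)\ge 2\,W_{\max}\bigg(\begin{bmatrix}0&x_1\\x_2&0\end{bmatrix}\bigg),
\]
and dividing by $2$ completes the argument.

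I do not anticipate a real obstacle: once one accepts that $W_{\max}$ inherits the structural properties \eqref{eq1.1}--\eqref{eq1.3}, unitary invariance, and Lemma \ref{le2.4}, the proof is a line-by-line transcription of Theorem \ref{Th2.9}. The only point deserving a moment's care is confirming that the scalar matrix $\begin{bmatrix}1&1\end{bmatrix}$ and the unitaries underlying Lemma \ref{le2.4} interact with $W_{\max}$ exactly as they do with a general $W_n$; but this is precisely what the remark attached to Lemma \ref{le2.4} guarantees, so the \emph{substance} of the theorem lies entirely in that transfer of properties rather than in any new computation.
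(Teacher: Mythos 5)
Your argument is correct, but it takes a genuinely different route from the paper's. You exploit facts the paper has already recorded --- that $W_{\max}$ satisfies \eqref{eq1.1}, \eqref{eq1.2} and \eqref{eq1.3} (so $(X,(W_{\max}))$ is itself an NROS), and that identity \eqref{eq2.1}, hence all of Lemma \ref{le2.4}, holds for $W_{\max}$ --- and then transplant the proof of Theorem \ref{Th2.9} verbatim: compression by $\alpha=\begin{bmatrix}1&1\end{bmatrix}$ through \eqref{eq1.2} for the lower bound, and Lemma \ref{le2.4}(c) plus a sign flip via part (a) and the triangle inequality for the upper bound; all of these steps are licensed, so the proof is complete. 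The paper instead argues intrinsically from the definition $W_{\max}(x)=\frac12\inf\Vert aa^*+b^*b\Vert$ over factorizations $x=ayb$ with $\oo_r(y)=1$. For the lower bound it takes an arbitrary factorization $\begin{bmatrix}0&x_1\\x_2&0\end{bmatrix}=ayb$, compresses it to a factorization of $x_1+x_2$, and estimates $\big\Vert\begin{bmatrix}1&1\end{bmatrix}(aa^*+b^*b)\begin{bmatrix}1&1\end{bmatrix}^*\big\Vert\le 2\Vert aa^*+b^*b\Vert$ before passing to the infimum --- in effect re-proving by hand exactly the instance of \eqref{eq1.2} that you invoke as a black box. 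For the upper bound it first reduces, by a substitution of variables, to showing $W_{\max}\Big(\begin{bmatrix}0&x_1+x_2\\x_1-x_2&0\end{bmatrix}\Big)\le W_{\max}(x_1)+W_{\max}(x_2)$, and then, starting from near-optimal factorizations $x_i=a_iy_ib_i$, writes down an explicit $2\times 4$ / $4\times 4$ / $4\times 2$ factorization of that off-diagonal matrix and closes with an $\epsilon$-argument; it never needs Lemma \ref{le2.4}(c) for $W_{\max}$ (though, like you, it does use part (a) for $W_{\max}$). Your route is shorter, isolates where the content really lies (namely that $W_{\max}$ obeys the NROS axioms, so the theorem is essentially a corollary of Theorem \ref{Th2.9}), and makes clear the result holds for any numerical radius norm satisfying \eqref{eq1.1} and \eqref{eq1.2}, not only the maximal one; the paper's route is self-contained at the level of the definition and exhibits the concrete factorizations, which is more informative about how the infimum in $W_{\max}$ is actually manipulated.
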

\begin{proof}
For the first inequality, let $\begin{bmatrix} 0 & x_1 \\ x_2 & 0 \end{bmatrix}=ayb$, $\oo_r(y)=1,$ for $a\in \mathcal{M}_{n,r}(\mathbb{C})$, $y\in \mathcal{M}_r(X)$, $b\in \mathcal{M}_{r,n}(\mathbb{C})$ and  $r\in \mathbb{N}$. So, we can write
\[
x_1+x_2=\begin{bmatrix}1 & 1 \end{bmatrix}
\begin{bmatrix} 0 & x_1 \\ x_2 & 0 \end{bmatrix}
\begin{bmatrix}1 \\ 1\end{bmatrix} =
\begin{bmatrix}1 & 1\end{bmatrix}
 ayb \begin{bmatrix}1 \\ 1\end{bmatrix}.
\]
We derive from the definition of $W_{\max}(x_1+x_2)$ that
\begin{align*}
\frac12 W_{\max}(x_1+x_2)&\le \frac14 \Big\Vert \begin{bmatrix}1 & 1\end{bmatrix} aa^* \begin{bmatrix}1 \\ 1\end{bmatrix}
+ \begin{bmatrix}1 & 1\end{bmatrix} b^*b \begin{bmatrix}1 \\ 1\end{bmatrix}
\Big \Vert\\
& =\frac14 \Big\Vert \begin{bmatrix}1 & 1\end{bmatrix} (aa^*+b^*b)
 \begin{bmatrix}1 \\ 1\end{bmatrix}
\Big \Vert\\
& \le \frac12 \Vert aa^*+b^*b \Vert.
\end{align*}
whence
\begin{equation}\label{eq2.20}
\frac12 W_{\max}(x_1+x_2)\le W_{\max}\Big(\begin{bmatrix} 0 & x_1 \\ x_2 & 0 \end{bmatrix}\Big).
\end{equation}
Replacing $x_2$ by $-x_2$ in inequality \eqref{eq2.20} and using Lemma \ref{le2.4} (a) for $W_{\max}$, we get
\begin{equation}\label{eq2.21}
\frac12 W_{\max}(x_1- x_2)\le W_{\max}\Big(\begin{bmatrix} 0 & x_1 \\ x_2 & 0 \end{bmatrix}\Big).
\end{equation}
The first inequality now deduce from inequalities \eqref{eq2.20} and \eqref{eq2.21}.\\
For the second inequality, it's sufficient to prove that
\[
W_{\max}\Big(\begin{bmatrix} 0 & x_1+x_2 \\ x_1-x_2 & 0 \end{bmatrix}\Big)\le
W_{\max}(x_1)+ W_{\max}(x_2).
\]
For any $x_1,x_2 \in \mathcal{M}_n(X) $ and given  $\epsilon>0,$ we may choose  $a_i\in \mathcal{M}_{n,r}(\mathbb{C})$, $b_i\in \mathcal{M}_{r,n}(\mathbb{C})$, $y_i\in \mathcal{M}_r(X)$ with $\oo_r(y_i)=1$ such that $x_i=a_i y_i b_i\,\,(i=1,2)$  and
\[
W_{\max}(x_1) +\epsilon \ge \frac12 \Vert a_1a_1^*+b_1^*b_1\Vert,~~\quad
W_{\max}(x_2) +\epsilon \ge \frac12 \Vert a_2a_2^*+b_2^*b_2\Vert.
\]
Now we can write the following representation:
\[
\begin{bmatrix}0 & x_1+x_2 \\ x_1-x_2 & 0 \end{bmatrix}=
\begin{bmatrix}a_1 & a_2&0&0\\ 0&0&a_1&a_2\end{bmatrix}
\begin{bmatrix}y_1&0&0&0 \\ 0&y_2&0 & 0\\0&0&y_1&0\\0&0&0&y_2\end{bmatrix}
\begin{bmatrix}0 &b_1\\0&b_2\\b_1&0\\-b_2&0\end{bmatrix}.
\]
It follows that
\begin{align*}
&\hspace{-0.5cm}W_{\max}\Big(\begin{bmatrix} 0 & x_1+x_2 \\ x_1-x_2 & 0 \end{bmatrix}\Big)\\&\le
\frac12\Bigg\Vert
\begin{bmatrix}a_1 & a_2&0&0\\ 0&0&a_1&a_2\end{bmatrix}
\begin{bmatrix}a_1^* &0\\a_2^*&0\\0&a_1^*\\0&a_2^*\end{bmatrix}+
\begin{bmatrix}0 & 0&b_1^*&-b_2^*\\ b_1^*&b_2^*&0&0 \end{bmatrix}
\begin{bmatrix}0 &b_1\\0&b_2\\b_1&0\\-b_2&0\end{bmatrix}  \Bigg\Vert\\
 & =\frac12\big\Vert a_1a_1^*+a_2a_2^*+b_1^*b_1+b_2^*b_2 \big\Vert\\
 &\le \frac12\big\Vert a_1a_1^*+b_1^*b_1\big\Vert+\frac12 \big\Vert a_2a_2^*+b_2^*b_2 \big\Vert\\
 & \le W_{\max}(x_1) + W_{\max}(x_2) +2\epsilon.
\end{align*}
Letting $\epsilon\rightarrow 0,$ we get the required inequality.
\end{proof}
 In the next result, other lower and upper bounds for $W_{\max}$ are furnished.
\begin{proposition}
Suppose $(X, (\oo_n))$ is an operator space. Then
\[
\frac12\max\left(W_{\max}(x_1), W_{\max}(x_2) \right) \le
W_{\max}\Big(\begin{bmatrix} 0 & x_1 \\ x_2 & 0 \end{bmatrix}\Big)\le
W_{\max}(x_1)+ W_{\max}(x_2)
\]
for $x_1, x_2\in \mathcal{M}_n(X)$.
\end{proposition}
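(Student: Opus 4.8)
The plan is to exploit the fact, recorded in the Introduction, that $W_{\max}$ satisfies \eqref{eq1.1}, \eqref{eq1.2} and \eqref{eq1.3}; in particular $(X,(W_{\max}))$ is itself an NROS, so every result already established for a general NROS $(X,(W_n))$ applies verbatim with $W_n$ replaced by $W_{\max}$. The one point needing care is the identification of norms: the auxiliary operator space norm produced by Lemma~\ref{le2.1} when applied to $W_{\max}$ is defined through \eqref{eq1.3}, and since $W_{\max}$ satisfies \eqref{eq1.3} this auxiliary norm is exactly the given $(\oo_n)$. With this identification Lemma~\ref{le2.1} reads $\frac12\oo_n(x)\le W_{\max}(x)\le\oo_n(x)$ for all $x\in\mathcal{M}_n(X)$, and this will be the workhorse for both bounds.

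For the left-hand inequality I would apply Lemma~\ref{le2.3} to the NROS $(X,(W_{\max}))$ to obtain $\frac12\max(\oo_n(x_1),\oo_n(x_2))\le W_{\max}\big(\big[\begin{smallmatrix}0&x_1\\x_2&0\end{smallmatrix}\big]\big)$. Unfolded, this is precisely the compression step of Lemma~\ref{le2.3}: one pre- and post-multiplies $\big[\begin{smallmatrix}0&x_1\\x_2&0\end{smallmatrix}\big]$ by the norm-one scalar matrices $\big[\begin{smallmatrix}I&0\\0&0\end{smallmatrix}\big]$ and $\big[\begin{smallmatrix}0&0\\I&0\end{smallmatrix}\big]$, lands on $\big[\begin{smallmatrix}x_1&0\\0&0\end{smallmatrix}\big]$, and invokes Ruan's axioms for $\oo$. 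Since Lemma~\ref{le2.1} gives $\oo_n(x_i)\ge W_{\max}(x_i)$, the left-hand side dominates $\frac12\max(W_{\max}(x_1),W_{\max}(x_2))$, which is the desired bound.

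For the right-hand inequality I would split $\big[\begin{smallmatrix}0&x_1\\x_2&0\end{smallmatrix}\big]=\big[\begin{smallmatrix}0&x_1\\0&0\end{smallmatrix}\big]+\big[\begin{smallmatrix}0&0\\x_2&0\end{smallmatrix}\big]$ and apply the triangle inequality for $W_{\max}$. By \eqref{eq1.3} the first summand contributes $\frac12\oo_n(x_1)$, while by Lemma~\ref{le2.4}(b) (valid for $W_{\max}$) the second equals $W_{\max}\big(\big[\begin{smallmatrix}0&x_2\\0&0\end{smallmatrix}\big]\big)=\frac12\oo_n(x_2)$. Finally Lemma~\ref{le2.1} gives $\frac12\oo_n(x_i)\le W_{\max}(x_i)$, so the sum is bounded by $W_{\max}(x_1)+W_{\max}(x_2)$. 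Equivalently one may combine the upper estimate of Lemma~\ref{le2.3} with Lemma~\ref{le2.1}, exactly as Remark~\ref{Re2.4} is derived.

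I do not expect a serious obstacle here: once $(X,(W_{\max}))$ is recognised as an NROS and the norm supplied by Lemma~\ref{le2.1} is identified with $(\oo_n)$ via \eqref{eq1.3}, the statement is essentially a transcription of Lemma~\ref{le2.3} together with Remark~\ref{Re2.4}. The only delicate point is this identification of the two operator space norms; after that both estimates are routine.
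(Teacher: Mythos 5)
Your proof is correct, but it takes a genuinely different route from the paper's. You treat $(X,(W_{\max}))$ as an abstract NROS --- legitimate, since the paper records (citing \cite{Itoh}) that $W_{\max}$ satisfies \eqref{eq1.1}, \eqref{eq1.2} and \eqref{eq1.3} --- identify the auxiliary norm produced by Lemma~\ref{le2.1} with the given $(\oo_n)$ via \eqref{eq1.3}, and then read the statement off from Lemma~\ref{le2.3} and Remark~\ref{Re2.4}; you rightly flag the norm identification as the one delicate point, and your appeal to Lemma~\ref{le2.4}(b) for $W_{\max}$ is sanctioned by the note following that lemma. The paper instead works directly with the definition of $W_{\max}$ as an infimum over decompositions $x=ayb$: its lower bound does not come from the compression argument of Lemma~\ref{le2.3} but from the inequalities \eqref{eq2.20} and \eqref{eq2.21} of the preceding theorem together with the triangle inequality, via
\[
2W_{\max}\Big(\begin{bmatrix} 0 & x_1 \\ x_2 & 0 \end{bmatrix}\Big)\ge \tfrac12 W_{\max}(x_1+x_2)+\tfrac12 W_{\max}(x_1-x_2)\ge W_{\max}(x_1),
\]
and its upper bound is proved by choosing $\epsilon$-optimal factorizations $x_i=a_iy_ib_i$, assembling the explicit block factorization
\[
\begin{bmatrix} 0 & x_1 \\ x_2 & 0 \end{bmatrix}=
\begin{bmatrix} a_1 & 0 \\ 0 & a_2 \end{bmatrix}
\begin{bmatrix} y_1 & 0 \\ 0 & y_2 \end{bmatrix}
\begin{bmatrix} 0 & b_1 \\ b_2 & 0 \end{bmatrix},
\]
and estimating with the positivity fact $\max(\Vert A\Vert,\Vert B\Vert)\le\Vert A+B\Vert$. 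What each buys: your argument is shorter and exhibits the proposition as a formal consequence of the general NROS machinery, and your intermediate bounds $\frac12\max\left(\oo_n(x_1),\oo_n(x_2)\right)\le W_{\max}\big(\begin{bmatrix} 0 & x_1 \\ x_2 & 0 \end{bmatrix}\big)\le\frac12\left(\oo_n(x_1)+\oo_n(x_2)\right)$ are in fact sharper on both sides than the stated ones (since $\frac12\oo_n\le W_{\max}\le\oo_n$); the paper's hands-on proof stays self-contained at the level of the definition of $W_{\max}$ and showcases the decomposition-splicing technique, which is the same device it uses for the theorem immediately preceding this proposition.
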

\begin{proof}
It turns out from inequalities \eqref{eq2.20} and \eqref{eq2.21} that
\begin{align*}
2W_{\max}\Big(\begin{bmatrix} 0 & x_1 \\ x_2 & 0 \end{bmatrix}\Big)&\ge
\frac12 W_{\max}(x_1+x_2)+ \frac12 W_{\max}(x_1-x_2)\\
&\ge \frac12  W_{\max}(x_1+x_2+x_1-x_2)=W_{\max}(x_1).
\end{align*}
Therefore,
\begin{eqnarray}\label{eq2.22}
W_{\max}\Big(\begin{bmatrix} 0 & x_1 \\ x_2 & 0 \end{bmatrix}\Big)\ge
\frac12 W_{\max}(x_1).
\end{eqnarray}
In a similar manner,
\begin{eqnarray}\label{eq2.23}
W_{\max}\Big(\begin{bmatrix} 0 & x_1 \\ x_2 & 0 \end{bmatrix}\Big)\ge
\frac12 W_{\max}(x_2).
\end{eqnarray}
The first inequality follows immediately from \eqref{eq2.22} and \eqref{eq2.23}.
To get the second inequality assume $x_1,x_2 \in \mathcal{M}_n(X) $
and   $\epsilon>0.$ we may select  $a_i\in \mathcal{M}_{n,r}(\mathbb{C})$, $b_i\in \mathcal{M}_{r,n}(\mathbb{C})$, $y_i\in \mathcal{M}_r(X)$ with $x_i=a_i y_i b_i\,\,(i=1,2)$  and
\[
W_{\max}(x_1) +\epsilon \ge \frac12 \Vert a_1a_1^*+b_1^*b_1\Vert,\quad
W_{\max}(x_2) +\epsilon \ge \frac12 \Vert a_2a_2^*+b_2^*b_2\Vert.
\]
The decomposition
\[
\begin{bmatrix} 0 & x_1 \\ x_2 & 0 \end{bmatrix}=
\begin{bmatrix} a_1 & 0 \\ 0 & a_2 \end{bmatrix}
\begin{bmatrix} y_1 & 0 \\ 0 & y_2 \end{bmatrix}
\begin{bmatrix} 0 & b_1 \\ b_2 & 0 \end{bmatrix}
\]
yields that
\begin{align}
W_{\max}\Big(\begin{bmatrix} 0 & x_1 \\ x_2 & 0 \end{bmatrix}\Big)&\le
\frac12 \Big\Vert \begin{bmatrix} a_1 & 0 \\ 0 & a_2 \end{bmatrix}
\begin{bmatrix} a_1 & 0 \\ 0 & a_2 \end{bmatrix}^*+
\begin{bmatrix} 0 & b_1 \\ b_2 & 0 \end{bmatrix}^*
\begin{bmatrix} 0 & b_1 \\ b_2 & 0 \end{bmatrix} \Big\Vert \notag\\
& =\frac12 \Big\Vert \begin{bmatrix} a_1a_1^*+b_2^*b_2 & 0 \\ 0 & a_2a_2^*+b_1^*b_1 \end{bmatrix}\Big\Vert \notag\\
& =\frac12 \max\left(\Vert a_1a_1^*+b_2^*b_2 \Vert, \Vert a_2a_2^*+b_1^*b_1 \Vert\right) \notag\\
&\le\frac12\Vert a_1a_1^*+b_2^*b_2+ a_2a_2^*+b_1^*b_1\Vert \label{eq2.24}\\
& \le \frac12 \Vert a_1a_1^*+b_1^*b_1\Vert +\frac12\Vert a_2a_2^*+b_2^*b_2 \Vert \notag\\
& \le W_{\max}(x_1) + W_{\max}(x_2) +2\epsilon.\notag
\end{align}
where inequality \eqref{eq2.24} follows from the fact that, if $A,B\in \mathcal{B}(H) $ are positive operator, then $\max(\Vert A\Vert, \Vert B\Vert)\le \Vert A+ B\Vert$. Now since $\epsilon>0$ is arbitrary, we obtain the desired inequality.
\end{proof}
\section{ Upper and Lower Bounds of $2\times 2$ block matrices}
In this section, first we present some pinching inequalities for $W_n$. Moreover, we provide different bounds for
$2\times 2$ block matrices of the form $\begin{bmatrix}
x & y\\ z & w
\end{bmatrix}.$ Some other related inequalities are also discussed.

\begin{lemma}\label{le3.1}
Assume $(X, (W_n))$ is an NROS and $x,y,z,w\in \mathcal{M}_n(X)$.  Then
\[
W_{2n}\bigg(\begin{bmatrix}
x & 0 \\ 0 & w
\end{bmatrix}\bigg)
\le W_{2n}\bigg(\begin{bmatrix}
x & y \\ z & w
\end{bmatrix}\bigg),
\]
$\hspace{1cm}$and
\[
W_{2n}\bigg(\begin{bmatrix}
0 & y \\ z & 0
\end{bmatrix}\bigg)\le W_{2n}\bigg(\begin{bmatrix}
x & y \\ z & w
\end{bmatrix}\bigg).
\]
\begin{proof}
The first inequality can easily follows from $A=\begin{bmatrix}
x& y\\ z& w
\end{bmatrix}$, by considering unitary $U=\begin{bmatrix}
I & 0 \\ 0 & -I
\end{bmatrix}$, triangle inequality and identity \eqref{eq2.1} as
\[
\begin{bmatrix}
x &0\\ 0 & w
\end{bmatrix}=\frac{A+U^*AU}{2}.
\]
For the second inequality, we use
\[
\begin{bmatrix}
0 & y\\ z & 0
\end{bmatrix}=\frac{A-U^*AU}{2}.
\]
\end{proof}
\end{lemma}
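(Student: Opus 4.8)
The plan is to use the classical pinching device: introduce the self-adjoint unitary $U=\begin{bmatrix} I & 0\\ 0 & -I\end{bmatrix}\in\mathcal{M}_{2n}(\mathbb{C})$ and recover the diagonal and off-diagonal parts of $A=\begin{bmatrix} x & y\\ z & w\end{bmatrix}$ as the averages $\frac{A+U^{*}AU}{2}$ and $\frac{A-U^{*}AU}{2}$. Both inequalities will then follow from the triangle inequality for the norm $W_{2n}(\cdot)$ together with the unitary invariance \eqref{eq2.1}, applied at level $2n$.

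First I would check that $U$ is unitary, which is immediate since $U=U^{*}$ and $U^{2}=I$, hence $U^{*}U=I$. A direct block multiplication gives $U^{*}AU=\begin{bmatrix} x & -y\\ -z & w\end{bmatrix}$, so that $\frac{A+U^{*}AU}{2}=\begin{bmatrix} x & 0\\ 0 & w\end{bmatrix}$ and $\frac{A-U^{*}AU}{2}=\begin{bmatrix} 0 & y\\ z & 0\end{bmatrix}$. For the first inequality, applying the triangle inequality to $\frac{A+U^{*}AU}{2}$ yields $W_{2n}\big(\frac{A+U^{*}AU}{2}\big)\le \frac12 W_{2n}(A)+\frac12 W_{2n}(U^{*}AU)$, and since $W_{2n}(U^{*}AU)=W_{2n}(A)$ by \eqref{eq2.1}, the right-hand side collapses to $W_{2n}(A)$, giving exactly $W_{2n}\big(\begin{bmatrix} x & 0\\ 0 & w\end{bmatrix}\big)\le W_{2n}(A)$.

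The second inequality is obtained in the same way from the difference $\frac{A-U^{*}AU}{2}$, using in addition that $W_{2n}$ is a norm, so $W_{2n}(-U^{*}AU)=W_{2n}(U^{*}AU)=W_{2n}(A)$. I do not expect any real obstacle here; the only points that require attention are verifying that $U$ genuinely qualifies as a unitary element of $\mathcal{M}_{2n}(\mathbb{C})$ (so that \eqref{eq2.1} is legitimately invoked at the $2n$ level rather than at level $n$) and carrying out the block product $U^{*}AU$ without sign errors. I would also remark that, since \eqref{eq2.1} was observed to hold for $W_{\max}$ as well, the identical argument furnishes the same two pinching inequalities with $W_{\max}$ in place of $W_{2n}$.
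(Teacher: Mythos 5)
Your proposal is correct and follows exactly the paper's own argument: the same unitary $U=\begin{bmatrix} I & 0\\ 0 & -I\end{bmatrix}$, the same pinching identities $\frac{A+U^{*}AU}{2}=\begin{bmatrix} x & 0\\ 0 & w\end{bmatrix}$ and $\frac{A-U^{*}AU}{2}=\begin{bmatrix} 0 & y\\ z & 0\end{bmatrix}$, combined with the triangle inequality and the unitary invariance \eqref{eq2.1}. You merely supply the routine verifications (that $U$ is unitary at level $2n$ and the block computation of $U^{*}AU$) that the paper leaves implicit.
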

\begin{proposition}
Let $(X, (W_n))$ be an NROS and $x,y \in \mathcal{M}_n(X)$. Then
\begin{equation}\label{equ3.1}
\max \left(W_n (x), W_n(y)\right) \le W_{2n} \bigg( \begin{bmatrix}
x & y \\ -y & -x
\end{bmatrix}
\bigg) \le W_n (x) + W_n (y).
\end{equation}
\end{proposition}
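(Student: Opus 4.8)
The plan is to treat the matrix $M=\begin{bmatrix} x & y \\ -y & -x \end{bmatrix}$ as its diagonal plus its off-diagonal part and to bound the two pieces using the pinching Lemma~\ref{le3.1} together with the symmetry identities of Lemma~\ref{le2.4}. No new machinery is needed: everything reduces to identifying $W_{2n}$ of a purely diagonal block and of a purely off-diagonal block, both of which have already been computed earlier.

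For the lower bound I would first apply the first pinching inequality of Lemma~\ref{le3.1} to $M$, which yields $W_{2n}\big(\begin{bmatrix} x & 0 \\ 0 & -x \end{bmatrix}\big)\le W_{2n}(M)$. Since $W_n$ is a norm we have $W_n(-x)=W_n(x)$, so the direct-sum axiom~\eqref{eq1.1} gives that the left-hand side equals $\max(W_n(x),W_n(-x))=W_n(x)$, whence $W_n(x)\le W_{2n}(M)$. Symmetrically, the second pinching inequality of Lemma~\ref{le3.1} gives $W_{2n}\big(\begin{bmatrix} 0 & y \\ -y & 0 \end{bmatrix}\big)\le W_{2n}(M)$, and applying Lemma~\ref{le2.4}(a) with $e^{i\theta}=-1$ followed by the ``in particular'' clause of Lemma~\ref{le2.4}(c) identifies this left-hand side with $W_n(y)$. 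Taking the maximum of the two estimates delivers $\max(W_n(x),W_n(y))\le W_{2n}(M)$.

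For the upper bound I would use the decomposition $M=\begin{bmatrix} x & 0 \\ 0 & -x \end{bmatrix}+\begin{bmatrix} 0 & y \\ -y & 0 \end{bmatrix}$ and the triangle inequality for $W_{2n}$. Evaluating the two summands by exactly the identities already invoked above ($W_n(x)$ for the diagonal term via~\eqref{eq1.1}, and $W_n(y)$ for the off-diagonal term via Lemma~\ref{le2.4}(a),(c)) immediately produces $W_{2n}(M)\le W_n(x)+W_n(y)$.

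The one point to watch, which I expect to be the only genuine subtlety, is the evaluation of the off-diagonal block. It is tempting to extract $y$ by sandwiching $M$ between the rectangular scalar matrices $\begin{bmatrix} I & 0\end{bmatrix}$ and $\begin{bmatrix} 0 \\ I\end{bmatrix}$, but the NROS axiom~\eqref{eq1.2} only controls congruences of the form $\alpha M\alpha^*$ and not arbitrary products $\alpha M\beta$; so that naive route is unavailable for the numerical radius. This is precisely why the computation $W_{2n}\big(\begin{bmatrix} 0 & y \\ -y & 0 \end{bmatrix}\big)=W_n(y)$ must be routed through the unitary-invariance and reflection identities of Lemma~\ref{le2.4}(a),(c) rather than through~\eqref{eq1.2}. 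Once this observation is in place the remaining steps are entirely routine.
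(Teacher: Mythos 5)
Your proof is correct and follows essentially the same route as the paper: the lower bound via the two pinching inequalities of Lemma~\ref{le3.1} with the diagonal block evaluated by the direct-sum axiom~\eqref{eq1.1} and the off-diagonal block by Lemma~\ref{le2.4}(a),(c), and the upper bound via the triangle inequality applied to the same diagonal-plus-off-diagonal decomposition. Your cautionary remark about why $\alpha M\beta$-type sandwiching is unavailable under axiom~\eqref{eq1.2} is accurate, though the paper does not need to make it explicit since it, too, routes the off-diagonal evaluation through the unitary-invariance identities.
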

\begin{proof}
On making use of Lemma \ref{le3.1}, we get
\[
W_n(x) = W_{2n} \bigg( \begin{bmatrix}
x & 0 \\ 0 & -x
\end{bmatrix} \bigg)
\le
W_{2 n} \bigg(
\begin{bmatrix}
x & y \\ -y & -x
\end{bmatrix} \bigg)
\]
$\hspace{.5cm}$ and
\[
W_n(y) = W_{2n} \bigg( \begin{bmatrix}
0 & y \\ -y & 0
\end{bmatrix} \bigg)
\le
W_{2n} \bigg(
\begin{bmatrix}
x & y \\ -y & -x
\end{bmatrix} \bigg).
\]
Therefore,
\[
\max \left( W_n(x), W_n (y) \right) \le
W_{2n} \bigg(
\begin{bmatrix}
x & y \\ -y & -x
\end{bmatrix} \bigg).
\]
On the other hand, by employing triangle inequality, inequality \eqref{eq1.1}, Lemma \ref{le2.4} (a) and (c), we have
\begin{align*}
W_{ 2n} \bigg( \begin{bmatrix}
x & y \\ -y & -x
\end{bmatrix} \bigg)
&\le W_{2n} \bigg(
\begin{bmatrix}
x & 0 \\ 0 & -x
\end{bmatrix} \bigg) + W_{2n} \bigg(
\begin{bmatrix}
0 & y \\ -y & 0
\end{bmatrix} \bigg)\\
& = W_n(x) + W_n (y).
\end{align*}
\end{proof}

\begin{remark}
If we choose $y=x$ in inequality \eqref{equ3.1}, then for $x\in \mathcal{M}_n(X)$
\begin{equation}\label{equ3.2}
W_n (x) \le W_{2n} \bigg( \begin{bmatrix}
x & x \\ -x & -x
\end{bmatrix}
\bigg) \le 2W_n (x).
\end{equation}
 Now we show that
 \begin{equation}\nonumber
 W_{2n}\bigg(\begin{bmatrix}
x & x\\ -x & -x
\end{bmatrix}\bigg)=\oo_n(x),\quad x\in \mathcal{M}_n(X).
 \end{equation}
Using identities \eqref{eq2.1}, \eqref{eq1.3} with the unitary $U=\frac{1}{\sqrt{2}}\begin{bmatrix}
I & I \\ -I & I
\end{bmatrix}$ we have
\begin{align*}
W_{2n}\bigg( \begin{bmatrix}
x & x\\ -x & -x \end{bmatrix}\bigg)&=
W_{2n}\bigg(\frac12 \begin{bmatrix}
I & -I\\ I & I
\end{bmatrix} \begin{bmatrix}
x & x\\ -x & -x
\end{bmatrix}\begin{bmatrix}
I & I\\ -I & I
\end{bmatrix}\bigg)\\
&=\frac12 W_{2n}\bigg( \begin{bmatrix}
0 & 4x\\ 0 & 0 \end{bmatrix}\bigg)=2W_{2n}\bigg( \begin{bmatrix}
0 & x\\ 0 & 0 \end{bmatrix}\bigg)=\oo_n(x).
\end{align*}
Based on the above identity, one can conclude that the inequalities of Lemma \ref{le2.1} and inequalities \eqref{equ3.2} are equivalent.
\end{remark}
The next result provide a lower and upper bound for
$ \begin{bmatrix} x & y \\ z & w \end{bmatrix}.$
\begin{proposition}
Let $(X, (W_n))$ be an NROS and $x,y,z,w\in \mathcal{M}_n(X)$. Then
\[
W_{2n}\bigg(\begin{bmatrix}
x & y \\ z & w
\end{bmatrix}\bigg)\ge \max\left(W_n(x), W_n(w), \frac{W_n(y)}{2}, \frac{W_n(z)}{2}\right)
\]
and
\[
W_{2n}\bigg(\begin{bmatrix}
x & y \\ z & w
\end{bmatrix}\bigg)\le W_n(x)+W_n(y)+W_n(z)+W_n(w).
\]
\begin{proof}
Utilizing Lemma \ref{le3.1} and the first inequality of Remark \ref{Re2.4}, we derive
\begin{align*}
W_{2n}\bigg(\begin{bmatrix}
x & y \\ z & w
\end{bmatrix}\bigg)&\ge \max\bigg(W_{2n}\Big(\begin{bmatrix}
x & 0 \\ 0 & w
\end{bmatrix}\Big), W_{2n}\Big(\begin{bmatrix}
0 & y \\ z & 0
\end{bmatrix}\Big)\bigg)\\
& \ge \max\bigg(\max\left(W_n(x), W_n(w)\right), \max\left(\frac{W_n(y)}{2}, \frac{W_n(z)}{2}\right) \bigg)\\
& = \max\left(W_n(x), W_n(w), \frac{W_n(y)}{2}, \frac{W_n(z)}{2}\right).
\end{align*}
To verify the other inequality first we present an upper bound to the matrix
$\begin{bmatrix} x & y \\ 0 & 0 \end{bmatrix}$.
 To achieve this, we use the triangle inequality as follows:
\begin{align}
W_{2n}\bigg(\begin{bmatrix} x & y \\ 0 & 0 \end{bmatrix}\bigg)&\le
W_{2n}\bigg(\begin{bmatrix} x & 0 \\ 0 & 0 \end{bmatrix}\bigg)+
W_{2n}\bigg(\begin{bmatrix} 0 & y \\ 0 & 0 \end{bmatrix}\bigg)\nonumber\\
&=W_n(x) + \frac12 \oo_n(y) \notag\\
& \hspace{2.8cm}(\text{by inequality \eqref{eq1.1} and identity \eqref{eq1.3}})\notag\\
&\le W_n(x) + W_n(y) \quad\quad\quad (\text{by Lemma \ref{le2.1} })\label{eq3.1}
\end{align}
For the general case consider unitary  $\begin{bmatrix} 0 & I \\ I & 0 \end{bmatrix}.$
We infer by identity \eqref{eq2.1} that
\begin{align*}
W_{2n}\bigg(\begin{bmatrix} x & y \\ z & w \end{bmatrix}\bigg)&\le
W_{2n}\bigg(\begin{bmatrix} x & y \\ 0 & 0 \end{bmatrix}\bigg)+
W_{2n}\bigg(\begin{bmatrix} 0 & 0 \\ z & w \end{bmatrix}\bigg)\\
& = W_{2n}\bigg(\begin{bmatrix} x & y \\ 0 & 0 \end{bmatrix}\bigg)+
W_{2n}\bigg(U^*\begin{bmatrix} w & z \\ 0 & 0 \end{bmatrix}U\bigg )\\
& = W_{2n}\bigg(\begin{bmatrix} x & y \\ 0 & 0 \end{bmatrix}\bigg)+
W_{2n}\bigg(\begin{bmatrix} w & z \\ 0 & 0 \end{bmatrix}\bigg )\\
& \le W_n(x)+W_n(y)+W_n(z)+W_n(w).\\
&\hspace{5cm} (\text{by inequality \eqref{eq3.1}})
\end{align*}
\end{proof}
\end{proposition}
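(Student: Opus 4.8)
The plan is to prove the two bounds separately, in each case reducing the full block matrix $A=\begin{bmatrix} x & y \\ z & w \end{bmatrix}$ to pieces already controlled by the earlier results. Throughout I take $(\oo_n(\cdot))$ to be the operator space norm defined by \eqref{eq1.3}, so that Lemma \ref{le2.1} and Remark \ref{Re2.4} are at hand.

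For the lower bound the idea is to feed $A$ into the two pinching inequalities of Lemma \ref{le3.1}. The diagonal pinch yields $W_{2n}(A)\ge W_{2n}\big(\begin{bmatrix} x & 0 \\ 0 & w \end{bmatrix}\big)$, which by Ruan's axiom \eqref{eq1.1} equals $\max(W_n(x),W_n(w))$. The off-diagonal pinch yields $W_{2n}(A)\ge W_{2n}\big(\begin{bmatrix} 0 & y \\ z & 0 \end{bmatrix}\big)$, and the first inequality of Remark \ref{Re2.4} bounds this below by $\frac12\max(W_n(y),W_n(z))$. Taking the larger of the two estimates gives the asserted lower bound $\max\big(W_n(x),W_n(w),\frac12 W_n(y),\frac12 W_n(z)\big)$.

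For the upper bound the key preliminary step is to control a single row-block $\begin{bmatrix} x & y \\ 0 & 0 \end{bmatrix}$. Splitting it as $\begin{bmatrix} x & 0 \\ 0 & 0 \end{bmatrix}+\begin{bmatrix} 0 & y \\ 0 & 0 \end{bmatrix}$ and applying the triangle inequality, the first summand equals $W_n(x)$ by \eqref{eq1.1}, while the second equals $\frac12\oo_n(y)$ by the definition \eqref{eq1.3}; invoking $\frac12\oo_n(y)\le W_n(y)$ from Lemma \ref{le2.1} produces the auxiliary bound $W_{2n}\big(\begin{bmatrix} x & y \\ 0 & 0 \end{bmatrix}\big)\le W_n(x)+W_n(y)$. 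To finish I would split $A=\begin{bmatrix} x & y \\ 0 & 0 \end{bmatrix}+\begin{bmatrix} 0 & 0 \\ z & w \end{bmatrix}$ and use the triangle inequality. The second summand is not yet of the handled shape, so I would conjugate it by the swap unitary $U=\begin{bmatrix} 0 & I \\ I & 0 \end{bmatrix}$: since $U^*\begin{bmatrix} w & z \\ 0 & 0 \end{bmatrix}U=\begin{bmatrix} 0 & 0 \\ z & w \end{bmatrix}$, unitary invariance \eqref{eq2.1} reduces it to the row-block case and supplies the bound $W_n(w)+W_n(z)$. Adding the two contributions gives $W_n(x)+W_n(y)+W_n(z)+W_n(w)$, as required.

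The only genuinely substantive point is the auxiliary row-block estimate, where one must cross between the numerical radius norm $W_n$ and the auxiliary operator space norm $\oo_n$ through Lemma \ref{le2.1}; everything else is routine bookkeeping with Ruan's block-diagonal axiom, the triangle inequality, and unitary invariance.
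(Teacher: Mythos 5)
Your proposal is correct and follows essentially the same route as the paper: the lower bound via the two pinching inequalities of Lemma \ref{le3.1} combined with Ruan's axiom \eqref{eq1.1} and the first inequality of Remark \ref{Re2.4}, and the upper bound via the auxiliary row-block estimate $W_{2n}\big(\begin{bmatrix} x & y \\ 0 & 0 \end{bmatrix}\big)\le W_n(x)+\frac12\oo_n(y)\le W_n(x)+W_n(y)$ followed by conjugation of the bottom row with the swap unitary and the triangle inequality. Your explicit remark that $(\oo_n)$ is the norm from \eqref{eq1.3}, so that Lemma \ref{le2.1} applies, makes precise a point the paper leaves implicit, but the argument is otherwise identical.
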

Another upper bound for $\begin{bmatrix} x & y \\ z & w \end{bmatrix}$ can be stated as follows.
\begin{theorem}
Let $(X, (W_n))$ be an NROS and $x,y,z,w\in \mathcal{M}_n(X)$. Then
\begin{align*}
W_{2n}\bigg(\begin{bmatrix}
x & y \\ z & w
\end{bmatrix}\bigg)\le \max\bigg(&\frac{W_n(x+w+i(y-z))}{2}, \frac{W_n(x+w-i(y-z))}{2}\bigg)\\
& + \frac{W_n(w-x)+W_n(y+z)}{2}.
\end{align*}
\begin{proof}
Applying identity \eqref{eq2.1} to the matrix $\begin{bmatrix} x & y \\ z & w \end{bmatrix}$ and unitary $U=\frac{1}{\sqrt{2}}\begin{bmatrix} I & -I \\ I & I \end{bmatrix}$, we have
\begin{align}\label{eq3.2}
W_{2n}\bigg(\begin{bmatrix}x & y \\ z & w\end{bmatrix}\bigg)&= W_{2n}\bigg(U^*\begin{bmatrix}x & y \\ z & w\end{bmatrix}U\bigg)\notag\\
& = \frac12 W_{2n}\bigg(\begin{bmatrix}x+y+z+w & -x+y-z+w \\ -x-y+z+w & x-y-z+w\end{bmatrix}\bigg)\\
& = \frac12 W_{2n}\bigg(\begin{bmatrix}x+w & y-z \\ z-y & x+w\end{bmatrix} +
\begin{bmatrix}y+z & w-x \\ w-x & -z-y \end{bmatrix}\bigg)\notag\\
& \le \frac12\bigg( W_{2n}\Big(\begin{bmatrix}x+w & y-z \\ z-y & x+w\end{bmatrix}\Big) +
W_{2n}\Big(\begin{bmatrix}y+z & w-x \\ w-x & -z-y \end{bmatrix}\Big)\bigg)\notag\\
& \le \frac12\bigg(\max\left(W_n(x+w+i(y-z)), W_n(x+w-i(y-z))\right)\notag\\
& + W_n(w-x)+W_n(y+z)\bigg).\quad
(\text{by Lemma \ref{le2.4} (c), (d)})\notag
\end{align}
\end{proof}
\end{theorem}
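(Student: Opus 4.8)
The plan is to exploit the unitary invariance \eqref{eq2.1} of $W_{2n}(\cdot)$ to conjugate $\begin{bmatrix} x & y \\ z & w \end{bmatrix}$ into a matrix whose entries are symmetric and antisymmetric combinations of $x,y,z,w$, and then to split the conjugated matrix into two blocks each of which is directly controlled by Lemma \ref{le2.4}. Concretely, I would take the unitary $U=\frac{1}{\sqrt2}\begin{bmatrix} I & -I \\ I & I \end{bmatrix}$; a routine multiplication gives
\[
U^*\begin{bmatrix} x & y \\ z & w \end{bmatrix}U
=\tfrac12\begin{bmatrix} x+y+z+w & -x+y-z+w \\ -x-y+z+w & x-y-z+w \end{bmatrix},
\]
and by \eqref{eq2.1} its $W_{2n}$ equals that of the original block matrix. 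The point of this step is to make $x+w$, $y-z$, $y+z$ and $w-x$---the natural ingredients of the target bound---appear as entries.

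The second step is the (entrywise verifiable) decomposition
\[
\tfrac12\begin{bmatrix} x+y+z+w & -x+y-z+w \\ -x-y+z+w & x-y-z+w \end{bmatrix}
=\tfrac12\begin{bmatrix} x+w & y-z \\ z-y & x+w \end{bmatrix}
+\tfrac12\begin{bmatrix} y+z & w-x \\ w-x & -(y+z) \end{bmatrix}.
\]
Applying the triangle inequality for $W_{2n}(\cdot)$ then reduces the problem to bounding the two summands separately, and I expect each to match one of the two terms in the claim.

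For the first summand, the matrix $\begin{bmatrix} x+w & y-z \\ z-y & x+w \end{bmatrix}$ has exactly the shape $\begin{bmatrix} q & -p \\ p & q \end{bmatrix}$ with $q=x+w$ and $p=z-y$, so Lemma \ref{le2.4}(d) evaluates its $W_{2n}$ as $\max\!\big(W_n(p+iq),W_n(p-iq)\big)$. Since each $W_n$ is a norm on a complex space it is absolutely homogeneous, so multiplying the argument by the unimodular scalar $\pm i$ leaves $W_n$ unchanged; this lets me rewrite $W_n\big((z-y)\pm i(x+w)\big)=W_n\big((x+w)\pm i(y-z)\big)$, producing (together with the overall factor $\tfrac12$) the term $\tfrac12\max\big(W_n(x+w+i(y-z)),W_n(x+w-i(y-z))\big)$. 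For the second summand I would split once more,
\[
\begin{bmatrix} y+z & w-x \\ w-x & -(y+z) \end{bmatrix}
=\begin{bmatrix} y+z & 0 \\ 0 & -(y+z) \end{bmatrix}
+\begin{bmatrix} 0 & w-x \\ w-x & 0 \end{bmatrix},
\]
bound the diagonal part by \eqref{eq1.1} (it equals $\max(W_n(y+z),W_n(-(y+z)))=W_n(y+z)$) and the off-diagonal part by the special case of Lemma \ref{le2.4}(c) (giving $W_n(w-x)$), and use the triangle inequality to obtain $\tfrac12\big(W_n(w-x)+W_n(y+z)\big)$. Adding the two estimates gives the asserted inequality.

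I expect the main obstacle to be discovering the correct unitary together with the correct splitting of the conjugated matrix: the target is a $\max$ of two $W_n$-terms added to a sum of two $W_n$-terms, so the conjugated matrix must break into one block of the rotation type governed by (d) and a second block that itself separates cleanly into a diagonal piece (handled by \eqref{eq1.1}) and a genuinely off-diagonal piece (handled by (c)). The one computation that genuinely must be checked is the homogeneity manipulation that reconciles the $i(x+w)$ thrown up by (d) with the $i(y-z)$ appearing in the statement.
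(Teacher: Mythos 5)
Your proposal is correct and follows the paper's proof essentially verbatim: the same unitary $U=\frac{1}{\sqrt2}\begin{bmatrix} I & -I \\ I & I\end{bmatrix}$, the same splitting of the conjugated matrix into $\begin{bmatrix}x+w & y-z \\ z-y & x+w\end{bmatrix}$ plus $\begin{bmatrix}y+z & w-x \\ w-x & -(y+z)\end{bmatrix}$, and the same appeal to Lemma \ref{le2.4}. If anything you are slightly more explicit than the paper, spelling out the unimodular-scalar rewriting $W_n\big((z-y)\pm i(x+w)\big)=W_n\big((x+w)\pm i(y-z)\big)$ needed to reconcile part (d) with the stated bound, as well as the further diagonal/off-diagonal split of the second summand via \eqref{eq1.1} and the special case of part (c), both of which the paper compresses into the terse citation ``by Lemma \ref{le2.4} (c), (d)''.
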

\begin{remark}
Suppose $(X, (W_n))$ is an NROS and $x,y,z,w\in \mathcal{M}_n(X)$. Then
\[W_{2n}\bigg(\begin{bmatrix}x & y \\ z & w\end{bmatrix}\bigg) \le
\max\left(W_n(x), W_n(w)\right)+ \frac{W_n(y+z)+W_n(y-z)}{2}.
\]
\begin{proof}
According to identity \eqref{eq3.2}, we can write
\begin{align*}
W_{2n}\bigg(\begin{bmatrix}x & y \\ z & w\end{bmatrix}\bigg)&=
\frac12 W_{2n}\bigg(\begin{bmatrix}x+w & w-x \\ w-x & x+w\end{bmatrix} +
\begin{bmatrix}y+z & y-z \\ z-y & -z-y \end{bmatrix}\bigg)\notag\\
& \le \frac12\bigg( W_{2n}\Big(\begin{bmatrix}x+w & w-x \\ w-x & x+w\end{bmatrix}\Big) +
W_{2n}\Big(\begin{bmatrix}y+z & y-z \\ z-y & -z-y \end{bmatrix}\Big)\bigg)\notag\\
& \le \max\left(W_n(x), W_n(w)\right)+ \frac{W_n(y+z)+W_n(y-z)}{2}.\\
& \hspace{5.5cm} (\text{by Lemma \ref{le2.4} (c)})
\end{align*}
\end{proof}
\end{remark}
The last result in this section is a generalization of a well known Lemma in \cite{Itoh}.
\begin{proposition}\label{le3.3}
Suppose $(X, (W_n))$ be an NROS. If $f \in \mathcal{M}_n(X)^*$ and $W^* (f) \le 1$, then there exists a state $P_0$ on $\mathcal{M}_n(\mathbb{C})$ such that
\begin{equation*}
\big| f( \alpha x \beta^* \pm \beta y \alpha^* ) \big| \le 2 P_0 ( \alpha \alpha^*)^{\frac{1}{2}} P_0 ( \beta \beta^*)^{\frac{1}{2}} W_{2n} \bigg( \begin{bmatrix}
0 & x \\ y & 0
\end{bmatrix} \bigg),
\end{equation*}
for all $\alpha,\beta \in \mathcal{M}_{n,r} ( \mathbb{C} )$, $x, y \in \mathcal{M}_r (X)$, $r\in\mathbb{N}$.\\
In addition,
\begin{equation}\label{ge3.5}
| f( \alpha x \beta \pm \gamma y \delta ) |\le \left( P_0 ( \alpha \alpha^*)^{\frac{1}{2}} P_0 ( \beta^* \beta)^{\frac{1}{2}}+
P_0 ( \gamma \gamma^*)^{\frac{1}{2}} P_0 ( \delta^* \delta)^{\frac{1}{2}}\right)
 \oo_{2n} \left( \begin{bmatrix}
0 & x \\ y & 0
\end{bmatrix} \right)
\end{equation}
for all $\alpha,\gamma \in \mathcal{M}_{n,r} ( \mathbb{C} )$, $x, y \in \mathcal{M}_r (X)$, $\beta,\delta \in \mathcal{M}_{r,n} ( \mathbb{C})$, $r\in\mathbb{N}$, where $W^*(f)= \sup\lbrace \vert f(x)\vert: x\in \mathcal{M}_n(X), W_n(x)\le 1 \rbrace$.
\end{proposition}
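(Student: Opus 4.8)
The plan is to reduce both displayed inequalities to the single-vector factorization lemma of Itoh and Nagisa \cite{Itoh}, which provides, for any $f$ with $W^*(f)\le 1$, a state $P_0$ on $\mathcal{M}_n(\mathbb{C})$ satisfying $|f(AwA^*)|\le P_0(AA^*)\,W_r(w)$ for all $A\in\mathcal{M}_{n,r}(\mathbb{C})$, $w\in\mathcal{M}_r(X)$, $r\in\mathbb{N}$. The whole argument is organized so that this one state $P_0$ serves every estimate. First I would record the algebraic identity
\[
\alpha x\beta^*+\beta y\alpha^*=\begin{bmatrix}\alpha & \beta\end{bmatrix}\begin{bmatrix}0 & x\\ y & 0\end{bmatrix}\begin{bmatrix}\alpha^*\\ \beta^*\end{bmatrix},
\]
which exhibits the left-hand side in the form $AwA^*$ with $A=\begin{bmatrix}\alpha & \beta\end{bmatrix}$ and $w=\begin{bmatrix}0 & x\\ y & 0\end{bmatrix}$, so that $AA^*=\alpha\alpha^*+\beta\beta^*$. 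Applying Itoh's lemma to this factorization and using linearity of $P_0$ gives $|f(\alpha x\beta^*+\beta y\alpha^*)|\le\big(P_0(\alpha\alpha^*)+P_0(\beta\beta^*)\big)\,W_{2r}\big(\begin{bmatrix}0 & x\\ y & 0\end{bmatrix}\big)$.

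Next I would sharpen the factor $P_0(\alpha\alpha^*)+P_0(\beta\beta^*)$ to $2P_0(\alpha\alpha^*)^{1/2}P_0(\beta\beta^*)^{1/2}$ by scaling: for $t>0$, replacing $(\alpha,\beta)$ by $(t\alpha,t^{-1}\beta)$ leaves both the functional value $f(\alpha x\beta^*+\beta y\alpha^*)$ and the block matrix $w$ unchanged, while the bound becomes $\big(t^2P_0(\alpha\alpha^*)+t^{-2}P_0(\beta\beta^*)\big)W_{2r}(w)$; the identity $\inf_{t>0}\tfrac{t^2u+t^{-2}v}{2}=\sqrt{uv}$ then produces the plus case. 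The minus case follows immediately by replacing $y$ with $-y$ and invoking Lemma \ref{le2.4}(a), since $W_{2r}\big(\begin{bmatrix}0 & x\\ -y & 0\end{bmatrix}\big)=W_{2r}\big(\begin{bmatrix}0 & x\\ y & 0\end{bmatrix}\big)$. This settles the first inequality.

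The decisive step is then to obtain \eqref{ge3.5} with the \emph{same} $P_0$, and the point is that no separate operator-space factorization is needed. Specializing the first inequality to $y=0$ and using \eqref{eq1.3} in the form $W_{2r}\big(\begin{bmatrix}0 & x\\ 0 & 0\end{bmatrix}\big)=\tfrac12\oo_r(x)$ collapses the factor $2$ and yields, after taking $A=\alpha$ and $B=\beta^*$, the single-term operator-space bound $|f(AxB)|\le P_0(AA^*)^{1/2}\,\oo_r(x)\,P_0(B^*B)^{1/2}$ for all $A\in\mathcal{M}_{n,r}(\mathbb{C})$, $B\in\mathcal{M}_{r,n}(\mathbb{C})$. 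I would apply this separately to $\alpha x\beta$ and to $\gamma y\delta$, combine via the triangle inequality, and finally bound $\oo_r(x)$ and $\oo_r(y)$ by $\max(\oo_r(x),\oo_r(y))=\oo_{2r}\big(\begin{bmatrix}0 & x\\ y & 0\end{bmatrix}\big)$, the last equality coming from Ruan's first axiom together with unitary invariance of $\oo$ (conjugating the diagonal matrix by the swap $\begin{bmatrix}0 & I\\ I & 0\end{bmatrix}$). This reproduces \eqref{ge3.5} exactly.

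The main obstacle I anticipate is precisely this bookkeeping that forces one state to control both the numerical-radius and the operator-space estimate. A naive proof of \eqref{ge3.5} through the diagonal factorization $\alpha x\beta+\gamma y\delta=\begin{bmatrix}\alpha & \gamma\end{bmatrix}\begin{bmatrix}x & 0\\ 0 & y\end{bmatrix}\begin{bmatrix}\beta\\ \delta\end{bmatrix}$ produces the constant $P_0(\alpha\alpha^*+\gamma\gamma^*)^{1/2}P_0(\beta^*\beta+\delta^*\delta)^{1/2}$, which by Cauchy--Schwarz dominates, rather than equals, the desired sum; routing instead through the $y=0$ specialization of the first inequality and splitting by the triangle inequality is what secures the sharper constant while keeping $P_0$ fixed throughout.
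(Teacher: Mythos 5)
Your proposal is correct, and while your treatment of the first inequality coincides with the paper's --- the factorization $\alpha x\beta^*+\beta y\alpha^*=\begin{bmatrix}\alpha & \beta\end{bmatrix}\begin{bmatrix}0 & x\\ y & 0\end{bmatrix}\begin{bmatrix}\alpha & \beta\end{bmatrix}^*$, the Itoh--Nagisa bound $|f(AwA^*)|\le P_0(AA^*)W(w)$ (the paper's \eqref{eq3.3}), the scaling $(t\alpha,t^{-1}\beta)$ with \eqref{eq3.5}, and Lemma \ref{le2.4}(a) for the minus sign --- your route to \eqref{ge3.5} is genuinely different. The paper imports a second lemma from \cite{Itoh}, namely \eqref{eq3.4}, applies it to the joint factorization $\begin{bmatrix}\alpha & \gamma\end{bmatrix}\begin{bmatrix}0 & x\\ y & 0\end{bmatrix}\begin{bmatrix}\delta\\ \beta\end{bmatrix}$ to obtain the coefficient $P_0(\alpha\alpha^*+\gamma\gamma^*)^{1/2}P_0(\beta^*\beta+\delta^*\delta)^{1/2}$, and then passes to the sum form via the arithmetic--geometric mean inequality and the substitution $(t\alpha,t^{-1}\beta,t\gamma,t^{-1}\delta)$. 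You instead re-derive the single-term bound $|f(\alpha x\beta)|\le P_0(\alpha\alpha^*)^{1/2}P_0(\beta^*\beta)^{1/2}\oo_r(x)$ by setting $y=0$ in the already-proved first inequality and invoking \eqref{eq1.3}; this is exactly the paper's quoted \eqref{eq3.4}, so the paper's auxiliary input becomes a corollary of your first part (mirroring, in the converse direction, the remark the paper makes after the proof). You then split $f(\alpha x\beta\pm\gamma y\delta)$ by the triangle inequality and absorb $\oo_r(x),\oo_r(y)$ into $\max\big(\oo_r(x),\oo_r(y)\big)=\oo_{2r}\big(\begin{bmatrix}0 & x\\ y & 0\end{bmatrix}\big)$, which is justified by Ruan's first axiom and the unitary invariance $\oo_n(UxV)=\oo_n(x)$ recorded in the paper, and this last identity handles both signs at once. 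Your closing diagnosis is accurate and is in fact the substantive advantage of your route: by Cauchy--Schwarz, $P_0(\alpha\alpha^*)^{1/2}P_0(\beta^*\beta)^{1/2}+P_0(\gamma\gamma^*)^{1/2}P_0(\delta^*\delta)^{1/2}\le P_0(\alpha\alpha^*+\gamma\gamma^*)^{1/2}P_0(\beta^*\beta+\delta^*\delta)^{1/2}$, so a single scaling parameter cannot convert the joint bound into the (sharper) sum form --- minimizing $\frac12\big(t^2P_0(\alpha\alpha^*+\gamma\gamma^*)+t^{-2}P_0(\beta^*\beta+\delta^*\delta)\big)$ over $t>0$ merely recovers the quantity one had before applying the arithmetic--geometric mean step, and the paper's scaling argument as written needs independent parameters $(t\alpha,t^{-1}\beta)$ and $(s\gamma,s^{-1}\delta)$, minimized separately, to actually reach \eqref{eq3.6}; your triangle-inequality splitting sidesteps this repair entirely, at the negligible cost of one use of unitary invariance of the operator space norm. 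A cosmetic point in your favor: the subscripts in the statement should indeed read $W_{2r}$ and $\oo_{2r}$, as you write, since $x,y\in\mathcal{M}_r(X)$.
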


\begin{proof}
It is proved \cite{Itoh} under the same hypothesis that
\begin{align}
\bigg| f( \alpha x \alpha^*) \bigg| \le P_0 (\alpha \alpha^*) W_n (x) \label{eq3.3}\\
\bigg| f( \alpha x \beta ) \bigg| \le 2 P_0 ( \alpha \alpha^*)^{\frac{1}{2}} P_0 ( \beta^* \beta)^{\frac{1}{2}} W_{2n}\bigg( \begin{bmatrix}
0 & x \\ 0 & 0
\end{bmatrix}\bigg)\label{eq3.4}
\end{align}
Now by inequality \eqref{eq3.3}, we derive
\begin{align*}
\big| f ( \alpha x \beta^* + \beta y \alpha^* ) \big| & =\bigg| f \left(
\begin{bmatrix}
\alpha & \beta
\end{bmatrix} \begin{bmatrix}
0 & x \\ y & 0
\end{bmatrix}
\begin{bmatrix}
\alpha & \beta
\end{bmatrix}^* \right)\bigg|\\
& \le P_0( \alpha \alpha^* + \beta \beta^*) W_{2n}\bigg( \begin{bmatrix}
0 & x \\ y & 0
\end{bmatrix} \bigg).
\end{align*}
Let $t > 0$ and replace $\alpha$ and $\beta$ by $t\alpha$ and $\frac{1}{t} \beta$, respectively. Then the equality
\begin{equation}\label{eq3.5}
\inf_{t >0} \big\{ t^2 P_0 ( \alpha \alpha^*) + t^{-2} P_0 ( \beta \beta^* ) \big\} = 2 P_0 ( \alpha \alpha^*) ^{\frac{1}{2}} P_0( \beta \beta^* )^{\frac{1}{2}}
\end{equation}
ensures
\[
\big| f ( \alpha x \beta^* + \beta y \alpha^* ) \big|\le 2 P_0 ( \alpha \alpha^*)^{\frac{1}{2}} P_0 ( \beta \beta^*)^{\frac{1}{2}} W_{2n} \bigg( \begin{bmatrix}
0 & x \\ y & 0
\end{bmatrix} \bigg).
\]
Replace $y$ by $-y$ in the above inequality and use Lemma \ref{le2.4} (a) to deduce the first inequality of the proposition.

To verify inequality \eqref{ge3.5}, we apply inequality \eqref{eq3.4} as follows:
\begin{align*}
\big| f( \alpha x \beta + \gamma y \delta ) \big| &= \big| f\left(\begin{bmatrix}
\alpha & \gamma \end{bmatrix} \begin{bmatrix} 0 & x \\ y & 0
\end{bmatrix} \begin{bmatrix} \delta \\ \beta
\end{bmatrix} \right) \big|\\
& \le P_0( \begin{bmatrix}
\alpha & \gamma \end{bmatrix} \begin{bmatrix}
\alpha & \gamma \end{bmatrix}^*)^{\frac12} P_0(\begin{bmatrix} \delta \\ \beta
\end{bmatrix}^* \begin{bmatrix} \delta \\ \beta\end{bmatrix} )^{\frac12}\oo_{2n} \Big(\begin{bmatrix} 0 & x \\ y & 0 \end{bmatrix}\Big)\\
& \hspace{3cm} (\text{by inequality \eqref{eq3.4} and identity \eqref{eq1.3}})\\
& = P_0(\alpha \alpha^*+\gamma\gamma^*)^{\frac12} P_0(\beta^*\beta+ \delta^*\delta)^{\frac12}
\oo_{2n} \Big(\begin{bmatrix} 0 & x \\ y & 0 \end{bmatrix}\Big)\\
&\le \frac12 \left(P_0(\alpha \alpha^*+\gamma\gamma^*)+ P_0(\beta^*\beta+ \delta^*\delta)\right) \oo_{2n}\Big(\begin{bmatrix} 0 & x \\ y & 0 \end{bmatrix}\Big).\\
& \hspace{2.7cm} (\text{ by the arithmetic-geometric mean inequality})
\end{align*}
If we replace $\alpha, \beta, \gamma, \delta$ by $t\alpha, t^{-1}\beta, t\gamma, t^{-1}\delta$, respectively, in the above inequality, then from equality \eqref{eq3.5} we get
\begin{equation}\label{eq3.6}
\big| f( \alpha x \beta + \gamma y \delta ) \big| \le \left( P_0 ( \alpha \alpha^*)^{\frac{1}{2}} P_0 ( \beta^* \beta)^{\frac{1}{2}}+
P_0 ( \gamma \gamma^*)^{\frac{1}{2}} P_0 ( \delta^* \delta)^{\frac{1}{2}}\right)
 \oo_{2n} \Big( \begin{bmatrix}
0 & x \\ y & 0
\end{bmatrix} \Big).
 \end{equation}
 Taking $-y$ instead of $y$ in inequality \eqref{eq3.6} and using Lemma \ref{le2.4} (a), we reach inequality \eqref{ge3.5}.
\end{proof}
Noting that by letting $y=0$, $\gamma=\delta=0$ in inequality \eqref{ge3.5} and applying the first inequality of Lemma \ref{le2.1}, we obtain inequality \eqref{eq3.4}.

\bibliographystyle{amsplain}

\end{document}